\DeclarePairedDelimiter\ceil{\lceil}{\rceil}
\DeclarePairedDelimiter\floor{\lfloor}{\rfloor}
\numberwithin{equation}{section}
\newtheorem{tw}{Theorem}[section]
\newtheorem{lm}[tw]{Lemma}
\newtheorem{uw}[tw]{Remark}
\newtheorem{df}[tw]{Definition}
\newtheorem{np}[tw]{Example}
\newtheorem{wn}[tw]{Corollary}
\newcommand{\Syl}{\text{Syl}}
\theoremstyle{remark}
\let\@@pmod\pmod
\DeclareRobustCommand{\pmod}{\@ifstar\@pmods\@@pmod}
\def\@pmods#1{\mkern4mu({\operator@font mod}\mkern 6mu#1)}
\title{Discriminants of special quadrinomials}
\author{Krystian Gajdzica}
\address{Institute of Mathematics \\
	Faculty of Mathematics and Computer Science \\
	Jagiellonian University in Cracow
}
\email{krystian.gajdzica@im.uj.edu.pl}
\keywords{discriminant; resultant; quadrinomial; discriminant of a quadrinomial.}
\thanks{The research was partially supported by a grant of the National Science Centre (NCN), Poland, no. UMO-2019/34/E/ST1/00094.}
\begin{document}

\setlength{\parindent}{10mm}
\maketitle

\begin{abstract}
    Finding an effective formula for describing a discriminant of \linebreak a quadrinomial (a formula which can be easily computed for high values of degrees of quadrinomials) is a difficult problem. In 2018 Otake and Shaska \cite{ST} using advanced matrix operations found an explicit expression of $\Delta(x^n+t(x^2+ax+b))$. In this paper we focus on deriving similar results, taking advantage of alternative elementary approach, for quadrinomials of the form $x^n+ax^k+bx+c$, where $ k \in \{2,3,n-1\}$. Moreover, we make some notes about $\Delta(x^{2n}+ax^n+bx^l+c)$ such that $n>2l$.
\end{abstract}

\section{Introduction}
Let $p(x)=a_nx^n+a_{n-1}x^{n-1}+\ldots+a_1x+a_0$ be a polynomial of degree $n \geqslant 1$ over an arbitrary field $\mathbb{F}$. By a discriminant of $p$, we understand a product
$$ \Delta(p):= a_n^{2n-2}{\displaystyle \prod_{1 \leqslant i <j \leqslant n}(\xi_i-\xi_j)^2},$$
where $\xi_1,\ldots,\xi_n$ are roots of the polynomial $p$ in some extension of the field $\mathbb{F}$ (taken with multiplicities). Obviously, determining $\Delta(p)$ using the above formula is a difficult challenge in general. However, the discriminant is a special case of \linebreak a general object called resultant, which can be computed for each pair of non-trivial polynomials. More precisely, let $q(x) = b_mx^m+b_{m-1}x^{m-1}+\ldots+b_1x+b_0$ be a polynomial of degree $m \geqslant 0$ over the field $\mathbb{F}$, too. We define the resultant of polynomials $p$ and $q$ as the determinant of their Sylvester matrix $\Syl(p,q)$, see Definition $2.1$; and denote its by $R(p,q)$, i.e. $R(p,q)=\det(\normalfont{\Syl}(p,q))$. We are able to determine the discriminant of each non-constant polynomial using its relationship with the resultant, namely, if we take the initial polynomial $p$, then 
 $$\Delta(p)=(-1)^{\frac{n(n-1)}{2}}a_n^{-1}R(p,p').$$ 
 Hence, the discriminant might be interpreted as a product of a certain factor and Sylvester's matrix determinant. In spite of the ability to find discriminant of any polynomial, calculating the determinant is a computationally complex task even for large sparse matrices. Therefore, mathematicians deal with derivation more efficient formulae of the discriminant for some infinite families of polynomials. For instance, if we take a binomial $f(x)=x^n+a$, where $n>1$ and $a \in \mathbb{C}$, then 
 $$\Delta(f)=(-1)^\frac{n(n-1)}{2}n^na^{n-1},$$
 which can be easily computed even for high values of the parameter $n$. Similarly, we might describe the discriminant of a trinomial, namely, if $f(x)=x^n+ax^k+b$ is a polynomial over the field $\mathbb{C}$ such that $n \geqslant 3$, $ab \neq 0$, $n>k>0$ and $d=(n-k,k)$, then 
 $$\Delta(f)=(-1)^\frac{n(n-1)}{2}b^{k-1}\left(b^\frac{n-k}{d}n^\frac{n}{d}+(-1)^{\frac{n}{d}+1}a^\frac{n}{d}k^\frac{k}{d}(n-k)^\frac{n-k}{d}\right)^d;$$
 for more information, see \cite{GD}. The above identity is more intricate than the previous one, but it also can be easily calculated for large values of parameters $n$ and $k$.\linebreak A question arises whether there is an analogous description for the discriminant of a quadrinomial. Surprisingly, the only known result is the formula for $\Delta(x^n+t(x^2+ax+b))$, where $a,b,t \in \mathbb{C}$ and $n \geqslant 3$, see \cite{ST}. Specifically, in 2018 Otake and Shaska found the following identity
 \begin{equation*}
     \Delta(x^n+t(x^2+ax+b))=(-1)^{m_1}t^{n-1}\left((n-2)^{n-2}(a^2-4b)t^2+\gamma_ct-n^nb^{n-1}\right),
 \end{equation*}
 such that
 \begin{align*}
     \gamma_c=\sum_{k=0}^{m_0}(-1)^{n+k}n^k(n-1)^{n-2k-4}(n-2)^ka^{n-2k-4}b^k\cdot S_k,
 \end{align*}
 for $a\neq 0$ or $a=0$ and $n$ even, and $\gamma_c=0$ for $a=0$ and $n$ odd, and $m_0=\floor{(n-3)/2}$, $m_1=\ceil{(n-3)/2}$ and
 \begin{align*}
     S_k=&(n-1)^3\binom{n-k-3}{k}a^4+4n^2(n-2)\binom{n-k-4}{k}b^2\\
     &-\frac{n(n-1)\{5n^2-(6k+23)n+10k+24\}}{n-k-3}\binom{n-k-3}{k}a^2b.
 \end{align*}
 In order to obtain the result, they took advantage of so called Bezoutian of polynomials, that is a generalized concept of the resultant, and performed over ten pages of quite intense matrix computations. Nevertheless, their paper points out the method, which might be effective in determining the discriminant of a quadrinomial in general. 
 
 Afterward, in 2019 Jones \cite{LJ1} found an explicit formula for the discriminant of an irreducible polynomial of the form $f(x)=x^n+A(Bx+C)^m \in \mathbb{Z}[x],$ where $n \geqslant 3$ and $1 \leqslant  m<n.$ He proved that
 \begin{equation*}
     \Delta(f)=(-1)^\frac{n(n-1)}{2}C^{n(m-1)}A^{n-1}\left(n^nC^{n-m}+(-1)^{n+m}B^n(n-m)^{n-m}m^mA\right)
 \end{equation*}
 holds. In 2020 Jones further extended the foregoing result (see \cite{LJ2}) and obtained
 \begin{tw}
 Let $n$ and $k$ be integers with $1 \leqslant k < n.$ Let $f(x)=x^n+tg(x),$ where $t \in \mathbb{Z}$ and $g(x)=\sum_{i=0}^{k}a_ix^i \in \mathbb{Z}[x]$ with $a_0a_k \neq 0.$ Define $\hat{g}(x)=\sum_{i=0}^{k}a_i(n-i)x^i,$ and assume that 
 \begin{equation*}
     \hat{g}(x)=\prod_{i=1}^k(A_ix+B_i),
 \end{equation*}
 where the $A_ix+B_i \in \mathbb{Z}[x]$ are not necessarily distinct. If $f$ is irreducible, then
 \begin{equation*}
     \Delta(f)=(-1)^\frac{n(n+2k-1)}{2}t^{n-1}a_0^{-1}\prod_{i=1}^k\left((-B_i)^n+t\sum_{j=0}^ka_jA_i^{n-j}(-B_i)^j\right).
 \end{equation*}
 \end{tw}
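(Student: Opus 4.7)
\bigskip

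\noindent\textbf{Proof proposal.} The plan is to reduce everything to the resultant identity
$\Delta(f) = (-1)^{n(n-1)/2}R(f,f')$ (which applies since $f$ is monic) and then to eliminate $f'$ by exploiting the special shape of $f$. The key algebraic observation is the Euler-type identity
\begin{equation*}
    nf(x)-xf'(x) = t\hat g(x),
\end{equation*}
which one verifies by plugging in $f(x)=x^n+t\sum_{i=0}^{k}a_ix^i$ and collecting coefficients. Consequently, for every root $\alpha$ of $f$ one has $t\hat g(\alpha)=-\alpha f'(\alpha)$, and taking the product over all $n$ roots yields
\begin{equation*}
    t^n\prod_{\alpha}\hat g(\alpha)=\Big(\prod_{\alpha}(-\alpha)\Big)\prod_{\alpha}f'(\alpha).
\end{equation*}

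Next I would evaluate both products by standard symmetric-function reasoning. Since $f$ is monic, $\prod_{\alpha}f'(\alpha)=R(f,f')$, and from $f(0)=ta_0$ one gets $\prod_{\alpha}(-\alpha)=ta_0$; also $\prod_{\alpha}\hat g(\alpha)=R(f,\hat g)$. Substituting and solving for $R(f,f')$ gives
\begin{equation*}
    R(f,f')=\frac{t^{n-1}}{a_0}\,R(f,\hat g).
\end{equation*}
The multiplicativity of the resultant in the second argument, applied to the hypothesized factorization $\hat g(x)=\prod_{i=1}^{k}(A_ix+B_i)$, reduces the computation to the linear case
\begin{equation*}
    R(f,A_ix+B_i)=(-1)^{n}A_i^{n}\,f(-B_i/A_i)=(-1)^{n}\Big((-B_i)^{n}+t\sum_{j=0}^{k}a_jA_i^{n-j}(-B_i)^{j}\Big),
\end{equation*}
where the first equality is the swap formula $R(f,g)=(-1)^{nm}\mathrm{lc}(g)^{n}\prod_{\beta:\,g(\beta)=0}f(\beta)$ applied to a monic $f$ and a linear $g$.

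Assembling the pieces, one obtains
\begin{equation*}
    \Delta(f)=(-1)^{n(n-1)/2+nk}\,t^{n-1}a_0^{-1}\prod_{i=1}^{k}\Big((-B_i)^{n}+t\sum_{j=0}^{k}a_jA_i^{n-j}(-B_i)^{j}\Big),
\end{equation*}
and since $n(n-1)/2+nk=n(n+2k-1)/2$ this matches the claimed formula exactly. The role of the irreducibility hypothesis in this argument is essentially to force $a_0\neq 0$ and to ensure that $f$ has no repeated roots so that $R(f,f')\neq 0$; the algebraic identity itself is formal and works over any field. The only place where one must be genuinely careful is the bookkeeping of signs and the interplay between the two equivalent formulas for $R(f,g)$ when applying multiplicativity; I expect this sign chase, together with confirming that the leading coefficient $\prod_i A_i=(n-k)a_k$ is consistent with the factorization hypothesis, to be the main potential source of error rather than a conceptual obstacle.
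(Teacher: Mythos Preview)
Your argument is correct. Note, however, that the paper does not actually prove this theorem: it is quoted in the Introduction as a result of Jones (reference~\cite{LJ2}), with no proof given, so there is no in-paper proof against which to compare directly.

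That said, a comparison with the paper's general method is still informative. Your key identity $nf(x)-xf'(x)=t\hat g(x)$ is precisely the first Euclidean-division step that the paper carries out in each of the theorems it \emph{does} prove (Theorems~3.1, 4.1, 5.3): there the remainder on dividing $f$ by $f'$ is exactly $\tfrac{t}{n}\hat g(x)$, and the paper then invokes Theorem~2.4 to pass from $R(f',f)$ to $R(f',r)$. From that point the approaches diverge. The paper, working with specific small values of $k$, continues with further polynomial divisions (and in Section~5 a careful analysis of the recurrence~(5.2)) until a quadratic is reached whose roots can be written explicitly, and only then applies the product formula~(2.3). You instead take the product over the roots of $f$ immediately, use $\prod_\alpha(-\alpha)=f(0)=ta_0$ to cancel one power of $t$, and then exploit the assumed integral linear factorization of $\hat g$ together with multiplicativity (Theorem~2.6) to reduce everything to the trivial computation of $R(f,A_ix+B_i)$. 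This is exactly what the extra factorization hypothesis in Jones's theorem buys, and your proof uses it cleanly; it also explains why the paper cannot simply quote this result for its own quadrinomials, since for generic complex $a,b,c$ no such $\mathbb{Z}[x]$-factorization of $\hat g$ is available.
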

 Furthermore, more recently he derived a discriminant formula for another one irreducible polynomial of the form $f(x)=x^{n-m}(x+k)^m+a,$ where $a,k,m,n \in \mathbb{Z},$ $ n \geqslant 3$ and $0<m<n$, namely,
 \begin{equation*}
     \Delta(f)=(-1)^\frac{n(n-1)}{2}a^{n-2}\left((-1)^{n-m}(n-m)^{n-m}k^nm^m+an^n\right),
 \end{equation*}
 for more details, see \cite{LJ3}.
 
 In order to expand a set of approaches to the discriminant of a quadrinomial problem, we use elementary algebraic resultant properties to derive effective formula for $\Delta(x^n+ax^k+bx+c)$, where $a,b,c \in \mathbb{C}$ and $k \in \{2,3,n-1\}$. The first impression suggests that our results can be directly concluded from the Jones's ones. Notwithstanding, we assume that a considered quadrinomial is neither irreducible over $\mathbb{Q}$ nor with coefficients in $\mathbb{Z}$.
 
 \begin{np}
 Let us take a polynomial $f(x)=x^{4n}-ix^3+ix+1 \in \mathbb{C}[x],$ where $n \geqslant 1$. Obviously, the quadrinomial $f$ does not satisfy assumptions of any of Jones's theorems. Therefore, we can not apply any of them to compute $\Delta(f)$, but we may use Theorem $5.3$.
 \end{np}
 
 At the end of the introduction, it needs to be highlighted that searching efficient formulae for discriminants of quadrinomials is not only art for art's sake, but such descriptions might be widely applied in algebraic number theory. For example, if the aforementioned polynomial $p$ belongs to $\mathbb{Z}[x]$ and is irreducible over $\mathbb{Q}$, $\mathbb{F}=\mathbb{Q}(\theta)$ and $p(\theta)=0$, then 
$\Delta(p)=[\mathbb{Z}_{\mathbb{F}}:\mathbb{Z}[\theta]]^2\Delta(\mathbb{F}),$
where $\mathbb{Z}_{\mathbb{F}}$ is the ring of integers of $\mathbb{F}$, then the first step to examine monogenicity of the polynomial $p$ (checking whether the equality $\Delta(p)=\Delta(\mathbb{F})$ holds) is to determine efficient formula for $\Delta(p)$ (see \cite{LJ1}). Another motivations of determining such identities come from \linebreak a bit more isolated situations and can be found, for instance, in \cite{CL, DS, G, ST}.
 
 This paper is organized as follows. In Sec. 2 we formally introduce the definitions of the resultant and the discriminant, their classical properties and connections between them. Sec. 3 is devoted to determining an efficient formula for $\Delta(x^n+ax^2+bx+c)$. In Sec. 4 we present similar consideration for the polynomial $x^n+ax^{n-1}+bx+c$. Sec. 5 concerns the discriminant of the quadrinomial $x^n+ax^3+bx+c$, which is the most difficult issue of this paper. Finally, in Sec. 6 we give some remarks and conclusions related to the discriminant of a quadrinomial and make some notes about $\Delta(x^{2n}+ax^n+bx^l+c)$, where $n>2l$.

\section{Basic results in resultants and discriminants}

In this section we recall standard definitions, facts and properties of the resultant and the discriminant. We omit all proofs, which are classical and can be easily found in \cite{SJ, TW}. Generally, (unless otherwise specified) we assume that each polynomial is considered over the field $\mathbb{C}$ of complex numbers. 
At the beginning, let us formally introduce the resultant and its characterization.

\subsection{Resultant} 
\phantom{That is the question...}

We begin our consideration by defining an object, which plays a central role in the sequel. Actually, there are a few equivalent definitions of the resultant, but we decide on the following classical one.
\begin{df}[Resultant]
Let $n,m$ be non-negative integers such that $n+m \geqslant 1$. Let $f(x)=a_nx^n+\ldots+a_0$ and $g(x)=b_mx^m+\ldots+b_0$ be polynomials of degrees $n$ and $m$, respectively, with coefficients in $\mathbb{C}$. The resultant $R(f,g)$ is an element of the field $\mathbb{C}$ defined as $\det(\normalfont{\Syl}(f,g))$, where $\normalfont{\Syl}(f,g)$ is given by 
\begin{equation*}
    \normalfont{\Syl}(f,g)=\begin{pmatrix}
  a_{n} & a_{n-1} & a_{n-2} & \cdots & 0 & 0 & 0 \\
  0 & a_{n} & a_{n-1} & \cdots & 0 & 0 & 0 \\
  \vdots  & \vdots  & \vdots & \ddots & \vdots & \vdots & \vdots  \\
  0 & 0 & 0 & \cdots & a_{1} & a_{0} & 0 \\
  0 & 0 & 0 & \cdots & a_{2} & a_{1} & a_{0} \\
  b_{m} & b_{m-1} & b_{m-2} & \cdots & 0 & 0 & 0 \\
  0 & b_{m} & b_{m-1} & \cdots & 0 & 0 & 0 \\
  \vdots  & \vdots  & \vdots & \ddots & \vdots & \vdots & \vdots  \\
  0 & 0 & 0 & \cdots & b_{1} & b_{0} & 0 \\
  0 & 0 & 0 & \cdots & b_{2} & b_{1} & b_{0} \\ 
 \end{pmatrix}\text{,} 
 \end{equation*}
 where first $m$ rows consist of the coefficients $a_n,\ldots,a_0$ shifted by $0,\ldots,m-1$ positions, respectively, and padded with zeros. Similarly, the last $n$ rows contain the coefficients $b_m,\ldots,b_0$ shifted by $0,\ldots,n-1$ positions, respectively, and padded with zeros.
\end{df}
The main application of the above object is to check whether two fixed polynomials over an arbitrary field possess a common root. From the standard properties of the determinant, we can automatically conclude a connection between $R(f,g)$ and $R(g,f)$.
\begin{wn}
If $f(x)=a_nx^n+\ldots+a_0$ and $g(x)=b_mx^m+\ldots+b_0,$ then
\begin{equation*}
R(f,g)=(-1)^{nm}R(g,f).
\end{equation*}
\end{wn}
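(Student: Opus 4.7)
The plan is to exploit directly the defining formula $R(f,g)=\det(\Syl(f,g))$ together with the elementary fact that interchanging two rows of a square matrix flips the sign of its determinant. The key observation is that $\Syl(f,g)$ and $\Syl(g,f)$ are built from exactly the same collection of rows; only the ordering of the two blocks (the $m$ rows formed from the coefficients of $f$ and the $n$ rows formed from the coefficients of $g$) is reversed.

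First I would make this precise by describing the concrete permutation that sends $\Syl(f,g)$ to $\Syl(g,f)$. In $\Syl(f,g)$ the rows are ordered as
\begin{equation*}
F_1,\dots,F_m,\;G_1,\dots,G_n,
\end{equation*}
where $F_i$ denotes the $i$th shifted row of coefficients of $f$ and $G_j$ the $j$th shifted row of coefficients of $g$. In $\Syl(g,f)$ the ordering is instead $G_1,\dots,G_n,F_1,\dots,F_m$. To transform the first ordering into the second, I would move each $G_j$ past all $m$ rows $F_1,\dots,F_m$ one adjacent transposition at a time; this costs exactly $m$ row swaps per $G_j$, hence $nm$ row swaps in total.

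Next I would invoke the standard multilinearity/antisymmetry property of the determinant: each adjacent row swap multiplies $\det$ by $-1$, so performing $nm$ such swaps yields
\begin{equation*}
\det(\Syl(g,f))=(-1)^{nm}\det(\Syl(f,g)).
\end{equation*}
Translating back via the definition of the resultant, $R(g,f)=(-1)^{nm}R(f,g)$, and since $(-1)^{nm}=(-1)^{-nm}$, this is equivalent to the claimed identity $R(f,g)=(-1)^{nm}R(g,f)$.

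There is essentially no obstacle here beyond bookkeeping: the only thing to check carefully is the count of transpositions, which one can verify either by the ``bubble-sort'' argument above or, equivalently, by computing the sign of the block-swap permutation in $S_{n+m}$, namely $\mathrm{sgn}\bigl(\bigl(\begin{smallmatrix}1\cdots m & m+1\cdots m+n\\ n+1\cdots n+m & 1\cdots n\end{smallmatrix}\bigr)\bigr)=(-1)^{nm}$. Both routes give the same answer, so the corollary follows immediately from Definition~2.1.
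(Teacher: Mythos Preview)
Your argument is correct and is exactly the route the paper indicates: it presents Corollary~2.2 as an immediate consequence of ``the standard properties of the determinant'' and omits any further proof, so your row-swap count making the sign $(-1)^{nm}$ explicit is precisely what is intended.
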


In order to speed up the process of computing $R(f,g)$, we can take advantage of the commonly known Euclidean division of polynomials.   

\begin{tw}[Euclidean division of polynomials]
If $f,g \in \mathbb{C}[x]$ and $g \neq 0$, then there exists a unique pair $q,r \in \mathbb{C}[x]$ such that $f= qg+r$ and $\deg(r)<\deg(g)$.
\end{tw}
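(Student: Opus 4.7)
The statement is the classical Euclidean division for polynomials over a field, so the plan is the standard one: prove existence by an inductive (degree-reducing) algorithm, and uniqueness by a simple degree argument.

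For existence, I would write $f(x) = a_n x^n + \cdots + a_0$ and $g(x) = b_m x^m + \cdots + b_0$ with $b_m \neq 0$, and induct on $n = \deg(f)$. The base case covers $\deg(f) < \deg(g)$ (taking $q = 0$, $r = f$) and the trivial case $f = 0$. For the inductive step, assuming $n \geqslant m$, I would consider
\begin{equation*}
f_1(x) := f(x) - \frac{a_n}{b_m} x^{n-m} g(x),
\end{equation*}
which by construction cancels the leading term of $f$, so $\deg(f_1) < n$. The inductive hypothesis supplies $q_1, r_1 \in \mathbb{C}[x]$ with $f_1 = q_1 g + r_1$ and $\deg(r_1) < \deg(g)$, whence
\begin{equation*}
f = \left(\tfrac{a_n}{b_m} x^{n-m} + q_1\right) g + r_1
\end{equation*}
gives the required decomposition.

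For uniqueness, I would assume $f = q g + r = \tilde{q} g + \tilde{r}$ with $\deg(r), \deg(\tilde{r}) < \deg(g)$, and rewrite this as $(q - \tilde{q}) g = \tilde{r} - r$. If $q \neq \tilde{q}$, the left-hand side has degree at least $\deg(g)$, while the right-hand side has degree strictly less than $\deg(g)$ — a contradiction. Hence $q = \tilde{q}$, and then $r = \tilde{r}$ follows immediately.

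There is no substantial obstacle here: the whole argument uses only that $\mathbb{C}$ is a field (so $b_m$ is invertible), and in fact the statement and proof hold verbatim over any field. The only minor point worth stating cleanly is the convention $\deg(0) = -\infty$, to make the inequality $\deg(r) < \deg(g)$ meaningful when $r = 0$; with that convention in place both halves of the argument proceed without incident.
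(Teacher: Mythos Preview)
Your proof is correct and is exactly the standard argument for Euclidean division over a field. Note, however, that the paper does not actually prove this statement: it is listed among the ``standard definitions, facts and properties'' in Section~2, whose proofs are explicitly omitted as classical and referred to \cite{SJ, TW}. So there is nothing to compare against; your write-up simply supplies the textbook proof that the paper chose to cite rather than reproduce.
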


The foregoing theorem is directly related to an algebraic property of the resultant, which we systematically use in the sequel.

\begin{tw}
Let $f, g \in \mathbb{C}[x]$, $f(x)=a_nx^n+\ldots+a_0, g(x)=b_mx^m+\ldots+b_0$, $\deg(f)=n$ and $\deg(g)=m$. If $n \geqslant m, $ $h$ is a polynomial such that $\deg(h) \leqslant n-m$ and $\deg(f+hg)=k,$ then
\begin{equation*}
    R(f,g)=(-1)^{(n-k)m}b_m^{n-k}R(f+hg,g).
\end{equation*}
Similarly, if $n \leqslant m$, $h$ is a polynomial such that $\deg(h) \leqslant m-n$ and $\deg(g+hf)=k,$ then
\begin{equation*}
    R(f,g)=a_n^{m-k}R(f,g+hf).
\end{equation*}
\end{tw}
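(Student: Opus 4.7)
The plan is to sidestep matrix manipulations entirely and invoke the "product-of-roots" characterization of the resultant, namely
\begin{equation*}
R(f,g) = a_n^m \prod_{i=1}^n g(\alpha_i) = (-1)^{nm} b_m^n \prod_{j=1}^m f(\beta_j),
\end{equation*}
where $\alpha_1, \ldots, \alpha_n$ are the roots of $f$ and $\beta_1, \ldots, \beta_m$ those of $g$, taken in an algebraic closure with multiplicity. This identity is a standard consequence of the Sylvester definition and may simply be quoted from \cite{SJ, TW}.

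For the first case I would apply the second form of the identity (the one using the roots of $g$) to both $R(f,g)$ and $R(f+hg,g)$. Since $g(\beta_j)=0$ gives $(f+hg)(\beta_j)=f(\beta_j)$, the two products $\prod_j f(\beta_j)$ and $\prod_j (f+hg)(\beta_j)$ coincide; the only discrepancy between the two evaluations comes from the leading-coefficient prefactor, which reads $(-1)^{nm} b_m^n$ when the first argument has declared degree $n$ and $(-1)^{km} b_m^k$ when it has declared degree $k$. Dividing one relation by the other produces $R(f,g)=(-1)^{(n-k)m} b_m^{n-k} R(f+hg,g)$.

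The second case is handled by the mirror argument: use the first form of the identity (roots of $f$), observe that $f(\alpha_i)=0$ gives $(g+hf)(\alpha_i)=g(\alpha_i)$, and compare the prefactors $a_n^m$ and $a_n^k$. The sign factor disappears because the first argument of the resultant is not disturbed, and one reads off $R(f,g)=a_n^{m-k} R(f,g+hf)$.

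The only point that will need care—hardly an obstacle—is being scrupulous about which declared degree enters the formula after the perturbation, since the Sylvester-based resultant depends on the nominal degrees ascribed to its arguments. Here $f+hg$ must be treated as a polynomial of its true degree $k$ (and not $n$) for the exponent of $b_m$ to come out correctly; likewise for $g+hf$ in the second case. Once that convention is pinned down, each half of the theorem reduces to a one-line computation.
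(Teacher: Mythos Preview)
The paper does not actually prove this theorem: Section~2 explicitly says ``We omit all proofs, which are classical and can be easily found in \cite{SJ, TW}.'' So there is no in-paper argument to compare against.

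Your argument via the product-of-roots representation is correct and is one of the standard ways to obtain this reduction formula. Writing $P=\prod_{j=1}^m f(\beta_j)$, you have $R(f,g)=(-1)^{nm}b_m^{\,n}P$ and, since $(f+hg)(\beta_j)=f(\beta_j)$, also $R(f+hg,g)=(-1)^{km}b_m^{\,k}P$; substituting the second into the first gives the claim without any division, so the case $P=0$ causes no trouble. The mirror computation with $\prod_i g(\alpha_i)$ handles the second identity, and the absence of a sign factor there is exactly because the first argument of $R$ is unchanged. Your remark about the declared degree of $f+hg$ being $k$ (not $n$) is the crux and is correctly identified.

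One small presentational point: in the paper the product formula is stated \emph{after} this theorem (as Theorem~2.5), so if you were writing this inside the paper you would want either to reorder or, as you in fact do, to appeal directly to the external references \cite{SJ, TW} for the product formula rather than to the paper's own Theorem~2.5.
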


Some works present an alternative equivalent resultant definition connected with roots of considered polynomials, which we state in a distinct theorem style. 

\begin{tw}
Let $f(x)=a_nx^n+\ldots+a_0, g(x)=b_mx^m+\ldots+b_0$ be polynomials such that $\deg(f)=n$ and $\deg(g)=m$. Additionally, let us assume that $f(x)=a_n(x-\xi_1)\ldots(x-\xi_n)$ and $g(x)=b_m(x-\eta_1)\ldots(x-\eta_m).$ Then
\begin{equation}
    R(f,g)=a_n^mb_m^n\prod_{i=1}^{n}\prod_{j=1}^{m}(\xi_i-\eta_j).
\end{equation}
Equivalently, 
\begin{equation}
    R(f,g)=a_n^m\prod_{i=1}^{n}g(\xi_i)
\end{equation}
or
\begin{equation}
    R(f,g)=(-1)^{nm}b_m^n\prod_{j=1}^{m}f(\eta_j).
\end{equation}
\end{tw}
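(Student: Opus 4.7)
The approach is to reduce the three identities to one and then establish that identity by a polynomial-divisibility argument. Identities $(2.2)$ and $(2.3)$ follow directly from $(2.1)$: substituting $g(x)=b_m\prod_j(x-\eta_j)$ yields $g(\xi_i)=b_m\prod_j(\xi_i-\eta_j)$, so multiplying by $a_n^m$ and taking the product over $i$ converts the right-hand side of $(2.2)$ into that of $(2.1)$; the analogous manipulation with $f(\eta_j)=a_n\prod_i(\eta_j-\xi_i)$ produces $(2.3)$, the $nm$ sign flips explaining the factor $(-1)^{nm}$, which is compatible with Corollary~$2.2$.

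For $(2.1)$ itself I would work in $\mathbb{C}[a_n,b_m,\xi_1,\dots,\xi_n,\eta_1,\dots,\eta_m]$, where Vieta's formulas recover every $a_i$ as $a_n$ times a signed elementary symmetric polynomial in the $\xi_i$'s and every $b_j$ analogously in the $\eta_j$'s. Since every entry of the first $m$ rows of $\text{Syl}(f,g)$ then carries a factor of $a_n$ and every entry of the last $n$ rows a factor of $b_m$, multilinearity of the determinant gives
\[R(f,g)=a_n^m b_m^n\, P(\xi_1,\dots,\xi_n,\eta_1,\dots,\eta_m),\]
and the claim reduces to $P=\prod_{i,j}(\xi_i-\eta_j)$.

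I would establish this by divisibility and degree comparison. Whenever $\xi_{i_0}=\eta_{j_0}$, the polynomials $f,g$ share a root; the Sylvester matrix coincides (up to transposition) with the matrix of the map $(u,v)\mapsto uf+vg$ on pairs with $\deg u<m$, $\deg v<n$, and a common factor of $f,g$ supplies a non-trivial kernel element. Hence $R(f,g)=0$ on each hyperplane $\xi_i=\eta_j$, so each $\xi_i-\eta_j$ divides $P$, and --- these factors being pairwise coprime in the polynomial ring --- so does their product. A degree inspection shows that $P$ has degree at most $m$ in each $\xi_i$ and at most $n$ in each $\eta_j$ (the relevant symmetric polynomials are linear in each variable), matching $\prod_{i,j}(\xi_i-\eta_j)$ exactly; thus $P=c\prod_{i,j}(\xi_i-\eta_j)$ for some $c\in\mathbb{C}$. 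Specializing $\eta_1=\cdots=\eta_m=0$ turns $g$ into $b_m x^m$ and collapses the lower $n$ rows of $\text{Syl}(f,g)$ into a block whose determinant is immediately computable, pinning down $c=1$.

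The principal obstacle is the passage from a common root to singularity of $\text{Syl}(f,g)$: identifying the Sylvester matrix with the map $(u,v)\mapsto uf+vg$ and analyzing its kernel is the only genuinely linear-algebraic ingredient of the proof, the remainder being symmetric-function bookkeeping and divisibility in a polynomial ring.
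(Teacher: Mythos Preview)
The paper does not actually prove this theorem: Section~2 explicitly states that all proofs there are omitted as classical and refers to \cite{SJ, TW}. Your argument is correct and is essentially the standard proof one finds in such references --- the reduction of $(2.2)$ and $(2.3)$ to $(2.1)$, the divisibility of the generic resultant by each $\xi_i-\eta_j$ via the kernel of $(u,v)\mapsto uf+vg$, the degree count forcing $P=c\prod_{i,j}(\xi_i-\eta_j)$, and the specialization $\eta_j=0$ to identify $c=1$ are all valid.
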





Theorem $2.5$ implies so called multiplicativity of the resultant with respect to each indeterminate. More precisely, if at least one of polynomials $f$ and $g$ can be expressed as a product, then we may split the problem of computing $R(f,g)$ into smaller pieces, which can be solved more easily.

\begin{tw}
If $f_1, f_2, g \in \mathbb{C}[x]$, then
\begin{equation*}
    R(f_1f_2,g)=R(f_1,g)R(f_2,g).
\end{equation*}
Symmetrically, if $g_1, g_2, f \in \mathbb{C}[x]$, then
\begin{equation*}
    R(f,g_1g_2)=R(f,g_1)R(f,g_2).
\end{equation*}
\end{tw}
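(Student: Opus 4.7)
The plan is to reduce the identity to the root-formulation of the resultant given by equation (2.2) of Theorem 2.5, after which the factorization becomes essentially formal. Write $n_i = \deg(f_i)$, $m = \deg(g)$, and factor
\begin{equation*}
f_1(x) = a \prod_{i=1}^{n_1}(x-\xi_i), \qquad f_2(x) = b \prod_{j=1}^{n_2}(x-\zeta_j),
\end{equation*}
where $a, b$ are the leading coefficients. Then $f_1 f_2$ has degree $n_1 + n_2$, leading coefficient $ab$, and its multiset of roots is $\{\xi_i\} \cup \{\zeta_j\}$.

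Applying (2.2) to $R(f_1 f_2, g)$, I would immediately obtain
\begin{equation*}
R(f_1 f_2, g) = (ab)^m \prod_{i=1}^{n_1} g(\xi_i) \prod_{j=1}^{n_2} g(\zeta_j) = \Bigl(a^m \prod_{i} g(\xi_i)\Bigr)\Bigl(b^m \prod_{j} g(\zeta_j)\Bigr) = R(f_1,g)\, R(f_2,g),
\end{equation*}
which is the first claim. The symmetric statement then follows by combining this with Corollary 2.2: namely, $R(f, g_1 g_2) = (-1)^{n(m_1+m_2)} R(g_1 g_2, f) = (-1)^{n m_1 + n m_2} R(g_1,f) R(g_2,f) = R(f,g_1)R(f,g_2)$, where $m_i = \deg(g_i)$ and $n = \deg(f)$.

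The main (mild) obstacle is accounting for degenerate cases, since Theorem 2.5 presumes the degrees are actually realized. If one of the $f_i$ is a nonzero constant $c$, then $n_i = 0$, the associated empty product is $1$, and the formula specializes to $R(c, g) = c^m$, which is consistent with reading the Sylvester matrix directly; this needs to be mentioned but requires no real argument. The case where some factor is the zero polynomial can be excluded from the hypothesis (resultants are defined only for nonzero polynomials in the setting of the paper), or disposed of by noting that both sides vanish. With these boundary cases handled, no further work is required.
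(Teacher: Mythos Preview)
Your argument is correct. The paper itself does not supply a proof of this statement: at the start of Section~2 it explicitly says ``We omit all proofs, which are classical and can be easily found in \cite{SJ, TW}.'' So there is nothing to compare against beyond noting that your derivation from the root formula (2.2) is exactly the standard one-line justification one finds in those references, and your use of Corollary~2.2 to transfer multiplicativity to the second argument is clean. Your treatment of the boundary cases (a constant factor, or a zero factor) is appropriate given the paper's standing assumption that resultants are taken of polynomials with well-defined degree.
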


Next, we recall the discriminant and its basic properties.

\subsection{Discriminant}

\begin{df}[Discriminant]
Let $f(x)=a_nx^n+\ldots+a_0$  be a polynomial of degree $\deg(f) \geqslant 1$ with coefficients in $\mathbb{C}$. We define the discriminant of $f$ as
\begin{equation}
    \Delta(f):= a_n^{2n-2}{\displaystyle \prod_{1 \leqslant i <j \leqslant n}(\xi_i-\xi_j)^2},
\end{equation}
where $\xi_1,\ldots,\xi_n$ are the roots (not necessarily distinct) of $f$.
\end{df}



Comparing the equation $(2.4)$ and Theorem $2.5$ one can easily deduce the following.

\begin{tw}
If $f(x)=a_nx^n+\ldots+a_0$ is a polynomial such taht $\deg(f) \geqslant 1$, then 
\begin{equation}
    \Delta(f)=(-1)^\frac{n(n-1)}{2}a_n^{-1}R(f,f').
\end{equation}
\end{tw}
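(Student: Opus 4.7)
The plan is to derive the formula directly from the product definition of the discriminant (equation $(2.4)$) and formula $(2.2)$ of Theorem $2.5$, which expresses $R(f,g)$ as a product of values of $g$ at the roots of $f$. Write $f(x)=a_n\prod_{i=1}^n(x-\xi_i)$ where $\xi_1,\ldots,\xi_n$ are the roots of $f$ (with multiplicity). Since $\deg(f')=n-1$, formula $(2.2)$ gives
\begin{equation*}
    R(f,f')=a_n^{n-1}\prod_{i=1}^n f'(\xi_i).
\end{equation*}

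First I would compute $f'(\xi_j)$ explicitly. Differentiating the factored form of $f$ via the product rule yields
\begin{equation*}
    f'(x)=a_n\sum_{k=1}^n\prod_{i\neq k}(x-\xi_i),
\end{equation*}
and evaluating at $x=\xi_j$ kills every summand except $k=j$, so $f'(\xi_j)=a_n\prod_{i\neq j}(\xi_j-\xi_i)$. Multiplying over $j$ then gives
\begin{equation*}
    \prod_{j=1}^n f'(\xi_j)=a_n^n\prod_{j=1}^n\prod_{i\neq j}(\xi_j-\xi_i)=a_n^n\prod_{i\neq j}(\xi_j-\xi_i).
\end{equation*}

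Next I would pair up the factors in the double product. For each pair $i<j$ we get both $(\xi_j-\xi_i)$ and $(\xi_i-\xi_j)=-(\xi_j-\xi_i)$; thus
\begin{equation*}
    \prod_{i\neq j}(\xi_j-\xi_i)=\prod_{1\leqslant i<j\leqslant n}\bigl(-(\xi_j-\xi_i)^2\bigr)=(-1)^{n(n-1)/2}\prod_{1\leqslant i<j\leqslant n}(\xi_i-\xi_j)^2.
\end{equation*}
Combining the two displays produces $R(f,f')=(-1)^{n(n-1)/2}a_n^{2n-1}\prod_{i<j}(\xi_i-\xi_j)^2$, and comparing with $\Delta(f)=a_n^{2n-2}\prod_{i<j}(\xi_i-\xi_j)^2$ gives $R(f,f')=(-1)^{n(n-1)/2}a_n\Delta(f)$, which is the claim after dividing by $a_n$ and by $(-1)^{n(n-1)/2}$ (noting that the sign squares to $1$).

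There is no real obstacle here; the argument is a one-page bookkeeping exercise with the identity in $(2.2)$. The only points requiring mild care are: confirming that $\deg(f')=n-1$ (automatic over $\mathbb{C}$ since $na_n\neq 0$ when $a_n\neq 0$), and correctly counting the sign $(-1)^{n(n-1)/2}$ coming from the $\binom{n}{2}$ pair-swaps in the double product. The formula also remains valid in the degenerate case of repeated roots: then both $R(f,f')$ and $\Delta(f)$ vanish, so $(2.5)$ holds trivially.
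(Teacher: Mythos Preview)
Your proof is correct and follows exactly the route the paper indicates: the paper omits the proof entirely, merely noting that Theorem~2.8 is obtained by ``comparing the equation $(2.4)$ and Theorem $2.5$,'' which is precisely what you carry out in detail. Your write-up simply fills in the standard bookkeeping (product-rule evaluation of $f'(\xi_j)$ and the sign count from pairing factors) that the paper leaves to the cited references \cite{SJ, TW}.
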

Finally, we remind both the reciprocal polynomial and a well-known theorem, from which we further deduce effective formulae for $\Delta(x^n+ax^{n-1}+bx^l+c)$, where $abc\neq0$ and $l\in\{n-3,n-2\}$.

\begin{df}[Reciprocal polynomial]
Let $f(x)=a_nx^n+\ldots+a_0\in\mathbb{C}[x]$. The reciprocal polynomial of $f$ is defined as
\begin{equation*}
    f^\star(x):=x^nf(1/x)=a_0x^n+a_1x^{n-1}+\ldots+a_n.
\end{equation*}
\end{df}

\begin{tw}
If $f(x)=a_nx^n+\ldots+a_0$ is a polynomial such that $a_na_0 \neq 0$, then
\begin{equation*}
    \Delta(f)=\Delta(f^\star).
\end{equation*}
\end{tw}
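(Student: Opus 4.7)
The plan is to prove this by working directly with the defining product in Definition $2.7$ rather than through the resultant formula $(2.5)$. The key observation is that the hypothesis $a_na_0\neq 0$ guarantees that every root of $f$ is nonzero, so one can form reciprocals freely.

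First I would check that the roots of $f^\star$ are exactly $1/\xi_1,\ldots,1/\xi_n$, where $\xi_1,\ldots,\xi_n$ are the roots of $f$ (listed with multiplicity). This is immediate from the identity $f^\star(x)=x^nf(1/x)$: for each $i$, $f^\star(1/\xi_i)=(1/\xi_i)^nf(\xi_i)=0$, and this accounts for all $n$ roots since the leading coefficient of $f^\star$ is $a_0\neq 0$.

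Next I would substitute into the definition of the discriminant of $f^\star$:
\begin{equation*}
\Delta(f^\star)=a_0^{2n-2}\prod_{1\leqslant i<j\leqslant n}\left(\frac{1}{\xi_i}-\frac{1}{\xi_j}\right)^2=a_0^{2n-2}\prod_{1\leqslant i<j\leqslant n}\frac{(\xi_j-\xi_i)^2}{\xi_i^2\xi_j^2}.
\end{equation*}
A simple counting argument shows that in the denominator, each index $i$ appears in exactly $n-1$ unordered pairs, hence
\begin{equation*}
\prod_{1\leqslant i<j\leqslant n}\xi_i^2\xi_j^2=\left(\prod_{i=1}^n\xi_i\right)^{\!2(n-1)}.
\end{equation*}
By Viète's formulas, $\prod_{i=1}^n\xi_i=(-1)^n a_0/a_n$, and raising to an even power kills the sign, so the denominator equals $a_0^{2(n-1)}/a_n^{2(n-1)}$.

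Combining everything gives
\begin{equation*}
\Delta(f^\star)=a_0^{2n-2}\cdot\frac{a_n^{2(n-1)}}{a_0^{2(n-1)}}\prod_{1\leqslant i<j\leqslant n}(\xi_i-\xi_j)^2=a_n^{2n-2}\prod_{1\leqslant i<j\leqslant n}(\xi_i-\xi_j)^2=\Delta(f),
\end{equation*}
as required. There is no genuine obstacle here; the only thing to be mildly careful about is verifying the exponent $2(n-1)$ when extracting $\prod_i\xi_i$ from the double product, and observing that the assumption $a_0\neq 0$ is exactly what legitimises the use of reciprocals.
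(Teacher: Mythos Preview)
Your proof is correct. The paper itself does not supply a proof of this statement: in Section~2 the author explicitly writes that all proofs of the basic resultant and discriminant facts are omitted as classical, with references to \cite{SJ,TW}. Your argument via Definition~2.7, the root correspondence $\xi_i\mapsto 1/\xi_i$, and Vi\`ete's formula is precisely the standard elementary proof one finds in those sources, and every step is sound; the only point one might expand slightly is that multiplicities are preserved under $\xi_i\mapsto 1/\xi_i$, which follows at once from the factorisation $f^\star(x)=a_0\prod_{i=1}^n(x-1/\xi_i)$ obtained by substituting $f(x)=a_n\prod_{i=1}^n(x-\xi_i)$ into $f^\star(x)=x^nf(1/x)$.
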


Obviously, there are many other facts on the discriminant. However, we pay attention only to those which are significant in our main investigation.

\section{The discriminant of the quadrinomial $f(x)=x^n+ax^2+bx+c$}

Now, we focus on the discriminant considered by Otake and Shaska in \cite{ST}. However, in contrast to their Bezoutian approach, we show an elementary and direct way to derive the explicit formula for $\Delta(x^n+ax^2+bx+c)$.

\begin{tw}
The discriminant of a polynomial $f(x)=x^n+ax^2+bx+c$ such that $abc \neq 0$ and $n>3$ can be expressed as
\begin{align*}
     \Delta(f)=&(-1)^\frac{n(n-1)}{2}n^{-1}(n-2)^{n-1}a^{n-1}\\&\cdot\Bigg[n^2e_2^{n-1}+4a^2e_2+2abe_1+b^2+\left(1+(-1)^{n-1}\right)bn\left(\left(\frac{e_1}{2}\right)^2-e_2\right)^\frac{n-1}{2}\\
     &\phantom{\cdot}+n\sum_{i=0}^{\floor{\frac{n-2}{2}}}\left(4ae_2+\frac{n-1}{n-1-2i}be_1\right)\binom{n-2}{2i}\left(\frac{e_1}{2}\right)^{n-2-2i}\left(\left(\frac{e_1}{2}\right)^2-e_2\right)^{i}\Bigg]
     ,
\end{align*}
where $e_1=-\frac{(n-1)b}{(n-2)a}$ and $e_2=\frac{nc}{(n-2)a}.$
\end{tw}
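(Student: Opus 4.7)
The plan is to apply Theorem~$2.8$ and compute $R(f,f')$ by reducing $f$ modulo $f'$ via the elementary identity $nf(x) - x f'(x) = g(x)$, where $g(x) := (n-2)ax^{2} + (n-1)bx + nc$. Evaluating at a root $\xi_i$ of $f$ gives $g(\xi_i) = -\xi_i f'(\xi_i)$, and since $f$ is monic with $\prod \xi_i = (-1)^n c$, Theorem~$2.5$ yields
\begin{equation*}
  R(f,g) = \prod_{i=1}^{n} g(\xi_i) = (-1)^n \Bigl(\prod_{i=1}^{n} \xi_i\Bigr) R(f,f') = c\, R(f,f').
\end{equation*}
Consequently $\Delta(f) = (-1)^{n(n-1)/2} R(f,g)/c$, while computing $R(f,g)$ through the roots $\eta_1, \eta_2$ of $g$ (again by Theorem~$2.5$) gives the complementary expression $R(f,g) = ((n-2)a)^{n} f(\eta_1) f(\eta_2)$. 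Vieta's formulas identify $\eta_1 + \eta_2$ and $\eta_1 \eta_2$ with $e_1$ and $e_2$ from the statement.

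To evaluate $f(\eta_1) f(\eta_2)$ in closed form, use the relation $g(\eta_j) = 0$ to eliminate the quadratic term: $(n-2)a\eta_j^{2} = -(n-1)b\eta_j - nc$ gives
\begin{equation*}
  f(\eta_j) = \eta_j^{n} - \frac{b\eta_j + 2c}{n-2}.
\end{equation*}
Multiplying the two factors and expanding in the power sums $p_k := \eta_1^{k} + \eta_2^{k}$ produces
\begin{equation*}
  (n-2)^{2} f(\eta_1) f(\eta_2) = (n-2)^{2} e_2^{n} - (n-2)\bigl(b e_2 p_{n-1} + 2 c p_{n}\bigr) + b^{2} e_2 + 2 b c e_1 + 4 c^{2}.
\end{equation*}
Substituting $c = (n-2) a e_2/n$ and multiplying by $n^{2}/\bigl(e_2 (n-2)^{2}\bigr)$ extracts the prefactor $n^{-1}(n-2)^{n-1} a^{n-1}$ and leaves a bracket which at this intermediate stage reads
\begin{equation*}
  n^{2} e_2^{n-1} - \tfrac{n^{2}}{n-2} b\, p_{n-1} - 2na\, p_n + \tfrac{n^{2}}{(n-2)^{2}} b^{2} + \tfrac{2n}{n-2} abe_1 + 4 a^{2} e_2.
\end{equation*}

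Two algebraic simplifications driven by the Vieta relation $(n-1)b + (n-2)ae_1 = 0$ finish the argument. First, the recurrence $p_n = e_1 p_{n-1} - e_2 p_{n-2}$ together with the identity $nb + 2(n-2)ae_1 = -(n-2)b$ collapses the $p_{n-1}, p_n$ part to $nb\, p_{n-1} + 2 n a e_2\, p_{n-2}$; the same relation reduces $\tfrac{n^{2}}{(n-2)^{2}} b^{2} + \tfrac{2n}{n-2} abe_1$ to $b^{2} + 2 abe_1$. Second, I insert the closed-form expansion
\begin{equation*}
  p_k = 2 \sum_{i=0}^{\lfloor k/2 \rfloor} \binom{k}{2i} \Bigl(\tfrac{e_1}{2}\Bigr)^{k-2i} \Bigl(\bigl(\tfrac{e_1}{2}\bigr)^{2} - e_2\Bigr)^{i},
\end{equation*}
peel the top summand $i = (n-1)/2$ off $p_{n-1}$ when $n$ is odd (this produces the $(1+(-1)^{n-1}) b n ((e_1/2)^{2} - e_2)^{(n-1)/2}$ contribution), factor $e_1/2$ from what remains, and invoke the binomial identity $\frac{n-1}{n-1-2i} \binom{n-2}{2i} = \binom{n-1}{2i}$ to merge the $p_{n-1}$ and $p_{n-2}$ sums into the single sum displayed in the statement. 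I expect the main obstacle to be precisely this last symbolic bookkeeping: it must be carried out with care for the parity of $n$ so that the isolated ``odd-$n$ piece'' and the combined summation appear in exactly the compact shape of the theorem rather than in an equivalent but heavier rearrangement.
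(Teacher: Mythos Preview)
Your proposal is correct and follows essentially the same route as the paper: reduce to the quadratic $g(x)=(n-2)ax^{2}+(n-1)bx+nc$ (the paper's $r$ is $g/n$, so the roots and hence $e_1,e_2$ coincide), evaluate at its two roots, and expand the resulting product in the power sums $p_k=z_1^{k}+z_2^{k}$ via the same binomial identity for $z_1^{s}+z_2^{s}$. The only cosmetic difference is that the paper goes from $R(f',f)$ to $R(f',r)$ by Theorem~2.4 and then evaluates $f'$ at the roots, whereas you relate $R(f,g)$ to $R(f,f')$ through $g(\xi)=-\xi f'(\xi)$ and evaluate $f$ at the roots (using $g(\eta_j)=0$ to remove the quadratic term and then Newton's recurrence); both calculations land on the same combination $nb\,p_{n-1}+2nae_2\,p_{n-2}$ before the final binomial bookkeeping.
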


\begin{proof}
If $f(x)=x^n+ax^2+bx+c$, $abc \neq 0$ and $n>3$, then, clearly, we have $f'(x)=nx^{n-1}+2ax+b$. Theorem $2.8$ and Corollary $2.2$ provide that
$$ \Delta(f)=(-1)^\frac{n(n-1)}{2}R(f,f')=(-1)^\frac{n(n-1)}{2}R(f',f).$$
Now, we apply Theorem $2.3$ in order to find the remainder $r$ on dividing $f$ by $f'$, namely,
$$x^n+ax^2+bx+c = \frac{1}{n}x(nx^{n-1}+2ax+b) + \frac{a(n-2)}{n}x^2+\frac{b(n-1)}{n}x+c.$$
Thus, 
$$r(x):=\frac{a(n-2)}{n}x^2+\frac{b(n-1)}{n}x+c,$$
and Theorem $2.4$ asserts that
$$R(f',f)=n^{n-2}R(f',r).$$
Next, we can easily determine the roots of the quadratic polynomial $r$, which are given by
$$z_1=-\frac{(n-1)b}{2(n-2)a}+\sqrt{\left(\frac{(n-1)b}{2(n-2)a}\right)^2-\frac{nc}{(n-2)a}}$$
and
$$z_2=-\frac{(n-1)b}{2(n-2)a}-\sqrt{\left(\frac{(n-1)b}{2(n-2)a}\right)^2-\frac{nc}{(n-2)a}}.$$
Let us define:  $e_1:=z_1+z_2=-\frac{(n-1)b}{(n-2)a}$ and $e_2:=z_1z_2=\frac{nc}{(n-2)a}.$ We use Theorem $2.5$, especially the equality $(2.3),$ and compute $R(f',r)$, as follows
\begin{align*}
     R(f',r)=& (-1)^{2(n-1)}\left(\frac{(n-2)a}{n}\right)^{n-1}\prod_{j=1}^{2}f'(z_j)\\
     =& \left(\frac{(n-2)a}{n}\right)^{n-1}\left(nz_1^{n-1}+2az_1+b\right)\left(nz_2^{n-1}+2az_2+b\right)\\
     =& \left(\frac{(n-2)a}{n}\right)^{n-1}\big(n^2(z_1z_2)^{n-1}+4a^2z_1z_2+2ab(z_1+z_2)\\
     & +b^2+2naz_1z_2(z_1^{n-2}+z_2^{n-2})+nb(z_1^{n-1}+z_2^{n-1})\big).
\end{align*}
Moreover, for each $s\in\mathbb{N}_+$ we have
\begin{align*}
     z_1^s&=\left(-\frac{(n-1)b}{2(n-2)a}+\sqrt{\left(\frac{(n-1)b}{2(n-2)a}\right)^2-\frac{nc}{(n-2)a}}\right)^s \\
     &=\left(\frac{e_1}{2}+\sqrt{\left(\frac{e_1}{2}\right)^2-e_2}\right)^s=\sum_{i=0}^s\binom{s}{i}\left(\frac{e_1}{2}\right)^{s-i}\left(\left(\frac{e_1}{2}\right)^2-e_2\right)^{\frac{i}{2}}
\end{align*}
and
\begin{align*}
     z_2^s&=\left(-\frac{(n-1)b}{2(n-2)a}-\sqrt{\left(\frac{(n-1)b}{2(n-2)a}\right)^2-\frac{nc}{(n-2)a}}\right)^s \\
     &=\left(\frac{e_1}{2}-\sqrt{\left(\frac{e_1}{2}\right)^2-e_2}\right)^s=\sum_{i=0}^s(-1)^{i}\binom{s}{i}\left(\frac{e_1}{2}\right)^{s-i}\left(\left(\frac{e_1}{2}\right)^2-e_2\right)^{\frac{i}{2}}.
\end{align*}
Therefore, the sum $z_1^s+z_2^s$ can be written as
\begin{align*}
     z_1^s+z_2^s&=\sum_{i=0}^s\left(1+(-1)^i\right)\binom{s}{i}\left(\frac{e_1}{2}\right)^{s-i}\left(\left(\frac{e_1}{2}\right)^2-e_2\right)^{\frac{i}{2}}\\
     &=2\sum_{i=0}^{\floor{\frac{s}{2}}}\binom{s}{2i}\left(\frac{e_1}{2}\right)^{s-2i}\left(\left(\frac{e_1}{2}\right)^2-e_2\right)^{i}.
\end{align*}
It is worth mentioning that the square root terms in the above formula disappear. In particular, it means that the sum $z_1^s+z_2^s$ is a polynomial in variables $b/a$ and $c/a$ with coefficients in the field $\mathbb{Q}$. Consequently, the formula for $R(f',r)$ takes the form
\begin{align*}
     R(f',r)&=\left(\frac{(n-2)a}{n}\right)^{n-1}\Bigg[n^2e_2^{n-1}+4a^2e_2+2abe_1+b^2\\
     &\phantom{=\left(\frac{(n-2)a}{n}\right)^{n-1}\Bigg[}+\left(1+(-1)^{n-1}\right)bn\left(\left(\frac{e_1}{2}\right)^2-e_2\right)^\frac{n-1}{2}\\
     &+n\sum_{i=0}^{\floor{\frac{n-2}{2}}}\left(4ae_2+\frac{n-1}{n-1-2i}be_1\right)\binom{n-2}{2i}\left(\frac{e_1}{2}\right)^{n-2-2i}\left(\left(\frac{e_1}{2}\right)^2-e_2\right)^{i}\Bigg].
\end{align*}
Finally, the discriminant of the quadrinomial $x^n+ax^2+bx+c$ can be represented as
\begin{align*}
     \Delta(f)=&(-1)^\frac{n(n-1)}{2}n^{n-2}\left(\frac{(n-2)a}{n}\right)^{n-1}\\
     &\cdot\Bigg[n^2e_2^{n-1}+4a^2e_2+2abe_1+b^2+\left(1+(-1)^{n-1}\right)bn\left(\left(\frac{e_1}{2}\right)^2-e_2\right)^\frac{n-1}{2}\\
     &\phantom{\cdot}+n\sum_{i=0}^{\floor{\frac{n-2}{2}}}\left(4ae_2+\frac{n-1}{n-1-2i}be_1\right)\binom{n-2}{2i}\left(\frac{e_1}{2}\right)^{n-2-2i}\left(\left(\frac{e_1}{2}\right)^2-e_2\right)^{i}\Bigg].
\end{align*}
If we reduce appropriate expressions in the above formula, we obtain the statement of our theorem.
\end{proof}

\begin{uw}
\rm{Let us point out that the assumption of $n>3$ in Theorem $3.1$ maintains that $\deg(r)=2$. Also, we can consider the case in which $n=3$. Then the remainder $r$ on dividing $f$ by $f'$ is a linear polynomial. If we utilize the approach of the above proof in this case, we get
$$\Delta(f)=-4a^3c+a^2b^2-4b^3+18abc-27c^2.$$}
\end{uw}
If we connect the foregoing result and Theorem $2.10$, we will be able to present an effective discriminant formula for another one quadrinomial.

\begin{wn}
The discriminant of a polynomial $f(x)=x^n+ax^{n-1}+bx^{n-2}+c$ such that $abc \neq 0$ and $n>3$ can be written as
\begin{align*}
     \Delta(f)=&(-1)^\frac{n(n-1)}{2}n^{-1}\left((n-2)bc\right)^{n-1}\\&\cdot\Bigg[n^2e_2^{n-1}+\frac{4b^2e_2+2abe_1+a^2}{c^2}+\left(1+(-1)^{n-1}\right)\frac{an}{c}\left(\left(\frac{e_1}{2}\right)^2-e_2\right)^\frac{n-1}{2}\\
     &\phantom{\cdot}+\frac{n}{c}\sum_{i=0}^{\floor{\frac{n-2}{2}}}\left(4be_2+\frac{n-1}{n-1-2i}ae_1\right)\binom{n-2}{2i}\left(\frac{e_1}{2}\right)^{n-2-2i}\left(\left(\frac{e_1}{2}\right)^2-e_2\right)^{i}\Bigg]
     ,
\end{align*}
where $e_1=-\frac{(n-1)a}{(n-2)b}$ and $e_2=\frac{n}{(n-2)b}.$
\end{wn}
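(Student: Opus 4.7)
The plan is to reduce Corollary 3.3 directly to Theorem 3.1 by passing through the reciprocal polynomial. For $f(x)=x^n+ax^{n-1}+bx^{n-2}+c$, Definition 2.9 yields
\[f^\star(x)=cx^n+bx^2+ax+1.\]
Since $abc\neq 0$, the leading coefficient $c$ of $f^\star$ is nonzero, so I would factor it out and introduce the monic polynomial
\[\tilde{f}(x):=x^n+\tfrac{b}{c}x^2+\tfrac{a}{c}x+\tfrac{1}{c},\]
which has exactly the shape treated in Theorem 3.1, with the roles of $a,b,c$ played by $b/c,\ a/c,\ 1/c$ respectively. Since $(b/c)(a/c)(1/c)\neq 0$ and $n>3$, the hypotheses of that theorem are met.

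Next, I would collate three facts about the discriminants involved. The polynomials $\tilde{f}$ and $f^\star$ share the same roots but differ in leading coefficient by a factor $c$, so the definition (2.4) gives $\Delta(f^\star)=c^{2n-2}\Delta(\tilde{f})$. Simultaneously, Theorem 2.10 applies since $1\cdot c\neq 0$, giving $\Delta(f)=\Delta(f^\star)$. Combining,
\[\Delta(f)=c^{2n-2}\Delta(\tilde{f}).\]
A direct substitution into the formulas for $e_1,e_2$ from Theorem 3.1 produces
\[e_1=-\tfrac{(n-1)(a/c)}{(n-2)(b/c)}=-\tfrac{(n-1)a}{(n-2)b},\qquad e_2=\tfrac{n(1/c)}{(n-2)(b/c)}=\tfrac{n}{(n-2)b},\]
which match the $e_1,e_2$ of the corollary; the $c$'s cancel before $e_1,e_2$ ever appear.

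Finally, I would apply Theorem 3.1 to $\tilde{f}$ and track the powers of $c$. The prefactor $A^{n-1}=(b/c)^{n-1}$ absorbs $c^{n-1}$ from the scaling $c^{2n-2}$, leaving $((n-2)bc)^{n-1}$ out front. Inside the bracket, the pieces $4A^2e_2+2ABe_1+B^2$ with $A=b/c,B=a/c$ become $(4b^2e_2+2abe_1+a^2)/c^2$; the factor $Bn$ becomes $an/c$; and the coefficient $4Ae_2+\tfrac{n-1}{n-1-2i}Be_1$ of the sum becomes $\tfrac{1}{c}\bigl(4be_2+\tfrac{n-1}{n-1-2i}ae_1\bigr)$, producing the overall $1/c$ in front of the sum. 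Reassembling gives precisely the stated expression. No conceptual obstacle is anticipated; the sole risk is a bookkeeping slip in the $c$-exponent count, so I would double-check by verifying that the formula reduces consistently to Theorem 3.1 under $a\leftrightarrow b$, $b\leftrightarrow a$, $c\leftrightarrow 1/c$ and by matching both the leading term $n^2e_2^{n-1}$ and the constant-like term $a^2/c^2$ against independent direct computations for small $n$.
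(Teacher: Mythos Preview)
Your proposal is correct and follows essentially the same route as the paper: pass to the reciprocal $f^\star(x)=cx^n+bx^2+ax+1$, use Theorem~2.10 to get $\Delta(f)=\Delta(f^\star)$, scale out $c$ via Definition~2.7 to obtain $\Delta(f^\star)=c^{2n-2}\Delta(\tilde f)$ with $\tilde f$ monic, and then substitute $a\to b/c$, $b\to a/c$, $c\to 1/c$ into Theorem~3.1. Your explicit tracking of the $c$-powers is more detailed than the paper's own proof, which simply instructs the reader to carry out the computation.
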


\begin{proof}
If $f(x)=x^n+ax^{n-1}+bx^{n-2}+c$, $abc \neq 0$ and $n>3$, then the reciprocal polynomial $f^\star$ is given by $f^\star(x)=cx^n+bx^2+ax+1$ and Theorem $2.10$ asserts that $\Delta(f)=\Delta(f^\star)$. From Definition $2.7$ we have $$\Delta(cx^n+bx^2+ax+1)=c^{2n-2}\Delta\left(x^n+\frac{b}{c}x^2+\frac{a}{c}x+\frac{1}{c}\right).$$ In order to find the desired formula, we use Theorem $3.1$ and compute the last discriminant in the above equation.
\end{proof}

\section{The discriminant of the quadrinomial $f(x)=x^n+ax^{n-1}+bx+c$}

In this section we obtain the expression for the discriminant of the quadrinomial $f(x)=x^n+ax^{n-1}+bx+c$. In the proof we use the same approach as in our computations of $\Delta(x^n+ax^2+bx+c)$.
\begin{tw}
The explicit discriminant formula of a polynomial $f(x)=x^n+ax^{n-1}+bx+c$ such that $abc\neq0$ and $n>4$ is given by
\begin{align*}
     \Delta(f)&=(-1)^\frac{(n+2)(n-1)}{2}n^4(n-1)^{n-3}a^{-2}b^{n-2}\\
     &\cdot\Bigg[\left(\frac{(n-1)a^2}{n^2}\right)^2e_2^{n-2}+\left(\frac{(n-1)b}{n}\right)^2e_2+\frac{(n-1)b}{n}\left(c-\frac{ab}{n^2}\right)e_1\\
     &\phantom{\cdot\Bigg[}+\left(c-\frac{ab}{n^2}\right)^2-\left(1+(-1)^n\right)\left(\frac{(n-1)a^2}{n^2}\right)\left(c-\frac{ab}{n^2}\right)\left(\left(\frac{e_1}{2}\right)^2-e_2\right)^\frac{n-2}{2}\\
     &\phantom{\cdot\Bigg[}-\left(\frac{(n-1)a^2}{n^2}\right)\sum_{i=0}^{\floor{\frac{n-3}{2}}}\left(2\frac{(n-1)b}{n}e_2+\left(c-\frac{ab}{n^2}\right)\frac{n-2}{n-2-2i}e_1\right)\\
     &\phantom{\cdot\Bigg[-\left(\frac{(n-1)a^2}{n^2}\right)\sum_{i=0}^{\floor{\frac{n-3}{2}}}}\cdot\binom{n-3}{2i}\left(\frac{e_1}{2}\right)^{n-3-2i}\left(\left(\frac{e_1}{2}\right)^2-e_2\right)^i\Bigg],
\end{align*}
where $e_1=-\frac{(n-2)ab+cn}{b(n-1)}$ and $e_2=\frac{ac}{b}.$
\end{tw}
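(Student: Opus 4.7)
The plan is to mimic the strategy of Theorem~3.1, starting from $\Delta(f)=(-1)^{n(n-1)/2}R(f,f')$ and reducing the resultant step-by-step via Euclidean division. With $f'(x)=nx^{n-1}+(n-1)ax^{n-2}+b$, the first division is
\[
f(x)=\Bigl(\tfrac{x}{n}+\tfrac{a}{n^{2}}\Bigr)f'(x)+r(x),\qquad r(x)=Ax^{n-2}+Bx+C,
\]
where $A=-\tfrac{(n-1)a^{2}}{n^{2}}$, $B=\tfrac{(n-1)b}{n}$, and $C=c-\tfrac{ab}{n^{2}}$. Since $\deg r=n-2$, Theorem~2.4 gives $R(f,f')=n^{2}R(r,f')$ and Corollary~2.2 converts this into a multiple of $R(f',r)$.

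In contrast to the Section~3 situation, $r$ is not yet quadratic, so a second Euclidean division is required. Dividing $f'$ by $r$ (in two stages, first cancelling $x^{n-1}$ and then cancelling $x^{n-2}$) produces a degree-one quotient and a quadratic remainder
\[
\tilde r(x)=\frac{n^{2}}{a^{2}}\left(bx^{2}+\frac{(n-2)ab+nc}{n-1}x+ac\right),
\]
whose roots $z_{1},z_{2}$ satisfy $z_{1}+z_{2}=e_{1}$ and $z_{1}z_{2}=e_{2}$ exactly as in the statement. A second application of Theorem~2.4 yields $R(f',r)=(-1)^{(n-3)(n-2)}A^{n-3}R(\tilde r,r)$, and identity (2.3) combined with Corollary~2.2 gives
\[
R(\tilde r,r)=\left(\tfrac{n^{2}b}{a^{2}}\right)^{n-2}r(z_{1})r(z_{2}).
\]

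It remains to expand $r(z_{1})r(z_{2})=(Az_{1}^{n-2}+Bz_{1}+C)(Az_{2}^{n-2}+Bz_{2}+C)$. This product collapses into six symmetric-function contributions,
\[
A^{2}e_{2}^{n-2}+B^{2}e_{2}+BCe_{1}+C^{2}+ABe_{2}(z_{1}^{n-3}+z_{2}^{n-3})+AC(z_{1}^{n-2}+z_{2}^{n-2}),
\]
and I would substitute the closed formula for $z_{1}^{s}+z_{2}^{s}$ already derived in the proof of Theorem~3.1. To reach the exact presentation in the statement I separate the top term ($i=(n-2)/2$) of $z_{1}^{n-2}+z_{2}^{n-2}$, which is nonzero only when $n$ is even and produces the factor $(1+(-1)^{n})$, and combine the remaining terms with those coming from $z_{1}^{n-3}+z_{2}^{n-3}$ via the identity $\binom{n-2}{2i}=\tfrac{n-2}{n-2-2i}\binom{n-3}{2i}$. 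Finally I collect the accumulated prefactors, checking that $\tfrac{n(n-1)}{2}+(n-1)(2n-5)$ and $\tfrac{(n+2)(n-1)}{2}$ differ by the even quantity $2(n-1)(n-3)$, which produces the outer constant $(-1)^{(n+2)(n-1)/2}n^{4}(n-1)^{n-3}a^{-2}b^{n-2}$ from the statement.

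The main obstacle is the bookkeeping of the double reduction: two uses of Theorem~2.4, two uses of Corollary~2.2, the correct identification of $\tilde r$ with its symmetric functions $e_{1},e_{2}$, and the parity-sensitive reorganisation of two Newton-type sums into one combined sum plus a single separated term. No essentially new idea beyond that of Section~3 is needed, but the algebraic manipulation is noticeably more delicate because the intermediate remainder $r$ has degree $n-2$ rather than degree two.
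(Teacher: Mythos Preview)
Your proposal is correct and follows essentially the same approach as the paper: two successive Euclidean divisions (first $f$ by $f'$, then $f'$ by the degree-$(n-2)$ remainder) reduce the resultant to an evaluation at the roots of the quadratic $\tilde r$, after which the power-sum identity from Section~3 is applied. The only cosmetic difference is that the paper takes the first remainder with the opposite sign (writing $f=q f'-r$ with positive leading coefficient $\tfrac{(n-1)a^{2}}{n^{2}}$), which slightly changes where the factors $(-1)^{n-1}$ appear in the intermediate bookkeeping but leads to the same final constant.
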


\begin{proof}
Let $f(x)=x^n+ax^{n-1}+bx+c$ be a polynomial such that $abc\neq0$ and $n>4.$ Then $f'(x)=nx^{n-1}+(n-1)ax^{n-2}+b$ and
$$x^n+ax^{n-1}+bx+c=\left(\frac{1}{n}x+\frac{a}{n^2}\right)\left(nx^{n-1}+(n-1)ax^{n-2}+b\right)-r(x),$$
where
$$r(x)=\frac{n-1}{n^2}a^2x^{n-2}-\frac{n-1}{n}bx-c+\frac{ab}{n^2}.$$
Furthermore, Theorem $2.8$, Corollary $2.2$ and Theorem $2.4$ provide that
\begin{align*}
    \Delta(f)&=(-1)^\frac{n(n-1)}{2}R(f',f)=(-1)^\frac{n(n-1)}{2}n^2R(f',-r)\\
    &=(-1)^\frac{n(n-1)}{2}n^2(-1)^{n-1}R(f',r)=(-1)^\frac{(n+2)(n-1)}{2}n^2R(r,f').
\end{align*}
Let us note that
$$f'(x)=\left(\frac{n^3}{(n-1)a^2}x+\frac{n^2}{a}\right)\left(\frac{n-1}{n^2}a^2x^{n-2}-\frac{n-1}{n}bx-c+\frac{ab}{n^2}\right)+\Tilde{r}(x),$$
where
$$\Tilde{r}(x)=n^2\left(\frac{b}{a^2}x^2+\frac{(n-2)ab+cn}{(n-1)a^2}x+\frac{c}{a}\right).$$
Once again we use Theorem $2.4$ and get
$$R(r,f')=\left(\frac{(n-1)a^2}{n^2}\right)^{n-3}R(r,\Tilde{r}).$$
The roots of quadratic polynomial $\Tilde{r}$ are given by
$$z_1=-\frac{(n-2)ab+cn}{2b(n-1)}+\sqrt{\left(\frac{(n-2)ab+cn}{2b(n-1)}\right)^2-\frac{ac}{b}},$$
$$z_2=-\frac{(n-2)ab+cn}{2b(n-1)}-\sqrt{\left(\frac{(n-2)ab+cn}{2b(n-1)}\right)^2-\frac{ac}{b}}.$$
Similarly to the previous proof, let us denote the sum and the product of the above expressions as $e_1:=z_1+z_2=-\frac{(n-2)ab+cn}{b(n-1)}, e_2:=z_1z_2=\frac{ac}{b}.$
If we connect Theorem $2.5$ with the computations carried out in the prior section, we obtain
\begin{align*}
     &R(r,\Tilde{r})=(-1)^{2(n-2)}\left(\frac{n^2b}{a^2}\right)^{n-2}\prod_{j=1}^{2}r(z_j)=\left(\frac{n^2b}{a^2}\right)^{n-2}\\
     &\cdot\left(\frac{n-1}{n^2}a^2z_1^{n-2}-\frac{n-1}{n}bz_1-c+\frac{ab}{n^2}\right)\left(\frac{n-1}{n^2}a^2z_2^{n-2}-\frac{n-1}{n}bz_2-c+\frac{ab}{n^2}\right)\\
     &=\left(\frac{n^2b}{a^2}\right)^{n-2}\Bigg[\left(\frac{(n-1)a^2}{n^2}\right)^2e_2^{n-2}+\left(\frac{(n-1)b}{n}\right)^2e_2+\frac{(n-1)b}{n}\left(c-\frac{ab}{n^2}\right)e_1\\
     &+\left(c-\frac{ab}{n^2}\right)^2-\left(1+(-1)^n\right)\left(\frac{(n-1)a^2}{n^2}\right)\left(c-\frac{ab}{n^2}\right)\left(\left(\frac{e_1}{2}\right)^2-e_2\right)^\frac{n-2}{2}\\
     &-\left(\frac{(n-1)a^2}{n^2}\right)\sum_{i=0}^{\floor{\frac{n-3}{2}}}\left(2\frac{(n-1)b}{n}e_2+\left(c-\frac{ab}{n^2}\right)\frac{n-2}{n-2-2i}e_1\right)\\
     &\phantom{-\left(\frac{(n-1)a^2}{n^2}\right)\sum_{i=0}^{\floor{\frac{n-3}{2}}}}\cdot\binom{n-3}{2i}\left(\frac{e_1}{2}\right)^{n-3-2i}\left(\left(\frac{e_1}{2}\right)^2-e_2\right)^i\Bigg].
\end{align*}
Consequently, the explicit formula of the discriminant $\Delta(f)$ takes the form
\begin{align*}
     &\Delta(f)=(-1)^\frac{(n+2)(n-1)}{2}n^2\left(\frac{(n-1)a^2}{n^2}\right)^{n-3}\left(\frac{n^2b}{a^2}\right)^{n-2}\\
     &\cdot\Bigg[\left(\frac{(n-1)a^2}{n^2}\right)^2e_2^{n-2}+\left(\frac{(n-1)b}{n}\right)^2e_2+\frac{(n-1)b}{n}\left(c-\frac{ab}{n^2}\right)e_1+\left(c-\frac{ab}{n^2}\right)^2\\
     &-\left(1+(-1)^n\right)\left(\frac{(n-1)a^2}{n^2}\right)\left(c-\frac{ab}{n^2}\right)\left(\left(\frac{e_1}{2}\right)^2-e_2\right)^\frac{n-2}{2}\\
     &-\left(\frac{(n-1)a^2}{n^2}\right)\sum_{i=0}^{\floor{\frac{n-2}{2}}}\left(2\frac{(n-1)b}{n}e_2+\left(c-\frac{ab}{n^2}\right)\frac{n-2}{n-2-2i}e_1\right)\\
     &\phantom{-\left(\frac{(n-1)a^2}{n^2}\right)\sum_{i=0}^{\floor{\frac{n-2}{2}}}}\cdot\binom{n-3}{2i}\left(\frac{e_1}{2}\right)^{n-3-2i}\left(\left(\frac{e_1}{2}\right)^2-e_2\right)^i\Bigg].
\end{align*}
The appropriate reduction of the above formula ends the proof.
\end{proof}

The foregoing proof consists of one additional polynomials division according to the one described in Sec. 3. However, as we see, the general concept remains the same in both cases.

\begin{uw}
\rm{If we assume in Theorem $4.1$ that $n=3$, we obtain a result from Remark $3.2.$ On the other hand, if we consider the polynomial $f(x)=x^4+ax^3+bx+c$, then the remainder $r$ on dividing $f$ by $f'$ is \text{a quadratic} polynomial. We can take advantage of techniques from proof of Theorem $3.1$ and check that
$$\Delta(f)=-4a^3b^3-27a^4c^2-6a^2b^2c-27b^4-192abc^2+256c^3.$$}
\end{uw}

\section{The discriminant of the quadrinomial $f(x)=x^n+ax^3+bx+c$}

The next part concerns the quadrinomial $x^n+ax^3+bx+c$ and demands more work than the previous ones. In order to derive the explicit formula of $\Delta(f)$ we take advantage of the generalized remainder theorem (for more details, see \cite{WW}).

\begin{tw}
If $p(x)=a_nx^n+\ldots+a_0$ and $q(x)=b_mx^m-b_{m-1}x^{m-1}-\ldots-b_0$ are polynomials over the field $\mathbb{C}$ such that $a_nb_m \neq 0$ and $n \geqslant m,$ then the remainder on dividing $p(x)$ by $q(x)$ is
\begin{equation}
    r(x)=\sum_{k=0}^{m-1}\left(a_k+\frac{1}{b_m}\sum_{i=0}^{k}b_i\sum_{\nu=0}^{n-m-k+i}t_{n-m-k+i+1-\nu}a_{n-\nu}\right)x^k,
\end{equation}
where $(t_r)$ is the linear recurrent sequence, defined as: 
\begin{equation}
     t_r = \begin{cases} 
          \frac{1}{b_m}, & r=1, \\
          \frac{1}{b_m}\sum_{i=1}^{r-1}b_{m-i}t_{r-i}, & r>1.
       \end{cases}
\end{equation}
\end{tw}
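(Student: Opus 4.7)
The plan is to reduce $p(x)$ modulo $q(x)$ by iterating the single substitution rule forced by $q$. Since $q(x)=b_m x^m-\sum_{i=0}^{m-1} b_i x^i$, one has $x^m\equiv \tfrac{1}{b_m}\sum_{i=0}^{m-1} b_i x^i \pmod{q(x)}$. The whole remainder is obtained by applying this rule repeatedly, so the natural strategy is to describe once and for all the reduction $R_r(x)$ of $x^{m-1+r}$ modulo $q$ for each $r\geq 1$, and then to sum the $R_r$'s weighted by the coefficients $a_m,a_{m+1},\ldots,a_n$.

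First I would introduce $R_r(x)=\sum_{k=0}^{m-1} c_{r,k}\, x^k$ and derive the one-step recurrence coming from $R_{r+1}\equiv x\,R_r \pmod{q}$: the leading coefficient $c_{r,m-1}$ is fed back through the substitution rule, which yields $c_{r+1,0}=(c_{r,m-1}/b_m)\, b_0$ and $c_{r+1,k}=c_{r,k-1}+(c_{r,m-1}/b_m)\, b_k$ for $1\leq k\leq m-1$, together with the initialization $c_{1,k}=b_k/b_m$. Comparing this recurrence with $(5.2)$ shows that $(t_r)$ is precisely the sequence tracking the top coefficient $c_{r,m-1}$ of the successive reductions, and an easy induction on $r$ then produces a closed-form convolution $c_{r,k}=\sum_{j=0}^{\min(k,r-1)} t_{r-j}\, b_{k-j}$ (up to the overall normalization implicit in the choice of $t_1$).

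Next I would substitute $p(x)\equiv \sum_{k=0}^{m-1} a_k x^k+\sum_{i=m}^{n} a_i R_{i-m+1}(x) \pmod{q(x)}$, read off the coefficient of $x^k$, and perform two successive changes of summation variable: first swap the order of summation to pull the $b_i$'s out of the inner sum, and then substitute $\nu=n-(s+k-i+m-1)$ so as to convert indexing in terms of $s$ and $a_{s+k-i+m-1}$ into indexing in terms of $\nu$ and $a_{n-\nu}$. After these substitutions the coefficient of $x^k$ takes exactly the shape appearing in $(5.1)$.

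The main obstacle I anticipate is purely combinatorial: keeping the summation ranges straight under the repeated reindexing, and checking that the interaction between the $\min(k,r-1)$ constraint in the convolution formula for $c_{r,k}$ and the range $m\leq i\leq n$ of the high-degree terms of $p$ collapses to precisely the limits $0\leq i\leq k$ and $0\leq \nu\leq n-m-k+i$ that appear in the statement. Beyond this bookkeeping, every step is either the definition of a recursion or an elementary substitution, and no deep new idea is required.
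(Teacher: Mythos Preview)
The paper does not actually prove this statement: Theorem~5.1 is quoted from Wanicharpichat~\cite{WW} (introduced with ``for more details, see~\cite{WW}'') and then used as a black box in the proof of Theorem~5.3, so there is no in-paper argument to compare your outline against.

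On its own merits your plan is sound and is the natural proof. The recursion you write for the coefficients $c_{r,k}$ of $R_r(x)\equiv x^{m-1+r}\pmod{q}$ is correct, and the closed form $c_{r,k}=\sum_{j=0}^{\min(k,r-1)} t_{r-j}\,b_{k-j}$ does follow by induction once one checks the precise link $c_{r,m-1}=b_m\,t_{r+1}$ (so your hedge about normalization is warranted --- it is an index shift, not just a scalar). The substitutions $i=k-j$ and $\nu=n-s$ then collapse the double sum to the shape in the statement, and the $\min(k,r-1)$ constraint translates exactly into the ranges $0\le i\le k$ and $0\le\nu\le n-m-k+i$ as you anticipated. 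One caveat worth flagging for when you carry this out: your convolution yields the coefficient of $x^k$ as $a_k+\sum_{i=0}^{k}b_i\sum_{\nu} t_{\ldots}\,a_{n-\nu}$ \emph{without} the extra prefactor $1/b_m$ printed in~(5.1). A direct check (e.g.\ $p(x)=x^4$, $q(x)=2x^3-x-1$, where the true remainder is $\tfrac12 x^2+\tfrac12 x$ but~(5.1) returns $\tfrac14 x^2+\tfrac14 x$) shows your version is the correct one, so treat this discrepancy as a transcription slip in the displayed formula rather than a flaw in your argument.
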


First, we compute an explicit formula of the sequence $(t_r)$ for a special pair of polynomials $p$ and $q$. 

\begin{lm}
If $p(x)=a_{n-1}x^{n-1}+a_2x^2+a_0$, $q(x)=b_3x^3-b_1x-b_0$ are polynomials such that $a_{n-1}b_3\neq0$ and $n\geqslant 5$, then the solution of recurrence relation $(5.2)$ is given by
\begin{equation}
     t_r = \begin{cases} 
          \left(\frac{1}{b_3}\right)^{\ceil{\frac{r}{2}}}\sum_{k=0}^{\floor{\frac{r}{6}}}\binom{\floor{\frac{r}{2}}-k}{\floor{\frac{r}{2}}-3k}b_0^{2k}b_1^{\floor{\frac{r}{2}}-3k}b_3^k, & 2\nmid r, \\
          \left(\frac{1}{b_3}\right)^{\frac{r}{2}}\sum_{k=1}^{\floor{\frac{r+2}{6}}}\binom{\frac{r}{2}-k}{\frac{r}{2}-3k+1}b_0^{2k-1}b_1^{\frac{r}{2}-3k+1}b_3^{k-1}, & 2|r.
       \end{cases}
\end{equation}
\end{lm}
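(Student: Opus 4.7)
The plan is to prove (5.3) by strong induction on $r$, first simplifying the general recurrence (5.2) to a three-term form. Because $q(x)=b_3x^3-b_1x-b_0$ corresponds to $b_2=0$ (and $b_{3-i}=0$ for $i\geqslant 4$), (5.2) collapses to
\begin{equation*}
    t_r=\frac{1}{b_3}\bigl(b_1 t_{r-2}+b_0 t_{r-3}\bigr)\qquad\text{for }r\geqslant 4,
\end{equation*}
together with $t_1=1/b_3$, $t_2=0$, $t_3=b_1/b_3^2$. A direct substitution shows that (5.3) agrees with these base values: for each of them the right-hand sum contains at most one nonzero term, and the exponents match.

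For the inductive step I would split by parity. For odd $r=2s+1$ with $s\geqslant 2$ the induction hypothesis supplies $t_{2s-1}$ from the odd branch of (5.3) and $t_{2s-2}$ from the even branch. After multiplying by $b_1/b_3$ and $b_0/b_3$ respectively, pulling out the common factor $(1/b_3)^{s+1}$, and collecting like monomials, the coefficient of $b_0^{2k}b_1^{s-3k}b_3^k$ becomes
\begin{equation*}
    \binom{s-1-k}{s-1-3k}+\binom{s-1-k}{s-3k}=\binom{s-k}{s-3k}
\end{equation*}
by Pascal's identity, which is exactly what (5.3) prescribes for $t_{2s+1}$. For even $r=2s$ one shifts the summation index in $b_0 t_{2s-3}$ by one (replacing $k$ by $k-1$) so that the $b_0^{2k-1}$ exponents align with those coming from $b_1 t_{2s-2}$, and Pascal again produces the required coefficient $\binom{s-k}{s-3k+1}$.

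The main obstacle is bookkeeping with the upper summation limits, since $\lfloor(2s-1)/6\rfloor$, $\lfloor 2s/6\rfloor$ and $\lfloor(2s+1)/6\rfloor$ (and their analogues in the even case) need not coincide. A short case analysis on $s\pmod 3$ shows that they differ by at most one; whenever the two contributing sums have different ranges, the single extra boundary term corresponds to a binomial coefficient that either vanishes in the complementary sum (so Pascal still applies formally via $\binom{n}{-1}=0$) or equals $1$ and matches exactly the top term of the target sum in (5.3). Once this boundary arithmetic is checked in each residue class, the induction closes with no further difficulty.
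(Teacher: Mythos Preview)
Your proof is correct and follows essentially the same route as the paper: both argue by strong induction, reduce the general recurrence to $t_r=\frac{1}{b_3}(b_1t_{r-2}+b_0t_{r-3})$ using $b_2=0$, verify $r=1,2,3$ by hand, and close the induction with Pascal's identity after aligning the summation indices. The only cosmetic difference is that the paper organizes the case analysis directly by residues modulo~$6$ (treating $k\equiv 0,2$; $k\equiv 1$; $k\equiv 3,5$; $k\equiv 4$ separately), whereas you split first by parity and then handle the boundary terms by $s\pmod 3$; these are the same six cases packaged differently.
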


\begin{proof}
The lemma can be proven by induction on $r$. It is clear for $r=1$, because $t_1=\frac{1}{b_3}.$ If $r=2$, then the equation $(5.3)$ gives us $t_2=\frac{0}{b_3}=0.$ Similarly, the relation $(5.2)$ maintains that $t_2=\frac{1}{b_3}b_2t_1=0$, since $b_2=0$ and we get the equality $(5.3)$ for $r=2$. Let us also consider the case when $r=3$. Then the formula $(5.3)$ implies that $t_3=\frac{b_1}{b_3^2}$. On the other hand, $(5.2)$ implies that $t_3=\frac{1}{b_3}\left(b_2t_2+b_1t_1\right)=\frac{b_1}{b_3^2}$. We assume that the formula $(5.3)$ holds for each $r=1,2,\ldots,k-1$, and show that it also holds for $r=k$. We need to examine four cases: $k\equiv 0,2 \pmod{6}$, $k\equiv 1 \pmod{6}$, $k\equiv 3,5 \pmod{6}$ and $k\equiv 4 \pmod{6}.$ Because in all possibilities the reasoning goes exactly in the same way we present only the situation in which $k\equiv 2 \pmod{6}$. At the beginning, let us observe that the formula $(5.2)$ takes the form $t_r=\frac{1}{b_3}\left(b_1t_{r-2}+b_0t_{r-3}\right)$ for $r>3$. Consequently, we obtain $t_k=\frac{1}{b_3}\left(b_1t_{k-2}+b_0t_{k-3}\right)$. This equality together with the induction assumption and the form of the number $k$ provide that
\begin{align*}
    t_k=\frac{1}{b_3}&\Biggl(b_1\left(\frac{1}{b_3}\right)^{\frac{k-2}{2}}\sum_{i=1}^{\floor{\frac{k}{6}}}\binom{\frac{k-2}{2}-i}{\frac{k-2}{2}-3i+1}b_0^{2i-1}b_1^{\frac{k-2}{2}-3i+1}b_3^{i-1} \\
    &+b_0\left(\frac{1}{b_3}\right)^{\ceil{\frac{k-3}{2}}}\sum_{i=0}^{\floor{\frac{k-3}{6}}}\binom{\floor{\frac{k-3}{2}}-i}{\floor{\frac{k-3}{2}}-3i}b_0^{2i}b_1^{\floor{\frac{k-3}{2}}-3i}b_3^i\Biggr).
\end{align*}
Let us suppose that $\floor{\frac{k}{6}}=s$ for some $s \in \mathbb{N}$. Then, we have 
\begin{align*}
    t_k=\frac{1}{b_3}&\Biggl(\left(\frac{1}{b_3}\right)^{\frac{k-2}{2}}\sum_{i=1}^{s}\binom{\frac{k-2}{2}-i}{\frac{k-2}{2}-3i+1}b_0^{2i-1}b_1^{\frac{k-2}{2}-3i+2}b_3^{i-1} \\
    &+\left(\frac{1}{b_3}\right)^{\frac{k-2}{2}}\sum_{i=0}^{s-1}\binom{\floor{\frac{k-3}{2}}-i}{\floor{\frac{k-3}{2}}-3i}b_0^{2i+1}b_1^{\floor{\frac{k-3}{2}}-3i}b_3^i\Biggr).
\end{align*}
Now, let us interchange the index in the second sum and take a common factor in front of the main parentheses
\begin{align*}
    t_k=\left(\frac{1}{b_3}\right)^{\frac{k}{2}}&\Biggl(\sum_{i=1}^{s}\binom{\frac{k-2}{2}-i}{\frac{k-2}{2}-3i+1}b_0^{2i-1}b_1^{\frac{k-2}{2}-3i+2}b_3^{i-1} \\
    &+\sum_{i=1}^{s}\binom{\floor{\frac{k-3}{2}}-i+1}{\floor{\frac{k-3}{2}}-3i+3}b_0^{2i-1}b_1^{\floor{\frac{k-3}{2}}-3i+3}b_3^{i-1}\Biggr).
\end{align*}
Additionally, in our case we have that $\floor{\frac{k-3}{2}}=\frac{k-2}{2}-1$. Because, $\binom{n}{k}=\binom{n-1}{k-1}+\binom{n-1}{k}$ for $n,k \in \mathbb{N}$ and  $\floor{\frac{k+2}{6}}=\floor{\frac{k}{6}}=s$, we conclude that
$$ t_k= \left(\frac{1}{b_3}\right)^{\frac{k}{2}}\sum_{i=1}^{\floor{\frac{k+2}{6}}}\binom{\frac{k}{2}-i}{\frac{k}{2}-3i+1}b_0^{2i-1}b_1^{\frac{k}{2}-3i+1}b_3^{i-1},$$
which agrees with the equation $(5.3)$. Similar computations in remaining possible cases complement the proof.
\end{proof}

If we use Lemma $5.2$ we may go through the most elaborate part related to derivating an explicit formula for $\Delta(x^n+ax^3+bx+c)$. In fact, the primary result of this section is more complex than the preceding ones.

\begin{tw}
The discriminant of a quadrinomial $f(x)=x^n+ax^3+bx+c$ such that $abc\neq 0$ and $n>4$ is given by
\begin{align*}
    \Delta(f)=&(-1)^{\frac{(n-1)(n+2)}{2}}a^{n-3}(n-3)^{n-3}(3a+nt_{n-2})^3\\
    &\Bigg(\left(\frac{a(n-3)}{n}\right)^2e_2^3+\frac{ab(n-1)(n-3)}{n^2}(e_1^2-2e_2)e_2\\
    &+\frac{ac(n-3)}{n}(e_1^3-3e_1e_2) +\left(\frac{b(n-1)}{n}\right)^2e_2+\frac{bc(n-1)}{n}e_1+c^2\Bigg),
\end{align*}
where $e_1=-\frac{nt_{n-1}}{3a+nt_{n-2}}, e_2= \frac{ab(n-3)-cn^2t_{n-3}}{a(n-3)(3a+nt_{n-2})}$ and the sequence $(t_r)$ is expressed as
\begin{align*}
t_r = \begin{cases}
          \left(\frac{n}{a(n-3)}\right)^{\ceil{\frac{r}{2}}}\sum_{k=0}^{\floor{\frac{r}{6}}}\binom{\floor{\frac{r}{2}}-k}{\floor{\frac{r}{2}}-3k}c^{2k}\left(\frac{b(n-1)}{n}\right)^{\floor{\frac{r}{2}}-3k}\left(\frac{a(n-3)}{n}\right)^k, & 2\nmid r, \\
          \left(\frac{n}{a(n-3)}\right)^{\frac{r}{2}}\sum_{k=1}^{\floor{\frac{r+2}{6}}}\binom{\frac{r}{2}-k}{\frac{r}{2}-3k+1}c^{2k-1}\left(\frac{b(n-1)}{n}\right)^{\frac{r}{2}-3k+1}\left(\frac{a(n-3)}{n}\right)^{k-1}, & 2|r.
       \end{cases}
\end{align*}
\end{tw}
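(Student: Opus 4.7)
The plan is to follow the skeleton of Sections 3 and 4, but with one additional layer of polynomial division because the remainder $r_1$ of $f$ by $f'$ is now cubic rather than quadratic. Starting from $\Delta(f)=(-1)^{n(n-1)/2}R(f,f')$ with $f'(x)=nx^{n-1}+3ax^2+b$, a direct computation gives $f(x) = (x/n) f'(x) + r_1(x)$ with $r_1(x) = \frac{a(n-3)}{n} x^3 + \frac{b(n-1)}{n} x + c$, of degree exactly $3$ under the hypotheses $abc \neq 0$ and $n > 4$. Corollary 2.2 together with Theorem 2.4 then yield $\Delta(f) = (-1)^{n(n-1)/2 + (n-3)(n-1)} n^{n-3} R(r_1, f')$.

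The step that goes beyond Sections 3 and 4 is the reduction of $f'$ modulo $r_1$: here the quotient has degree $n-4$ and depends on $n$, so one cannot handle it by a direct ad hoc division. This is exactly the situation addressed by Theorem 5.1, applied with $p = f'$ and $q = r_1$, identifying $b_3 = a(n-3)/n$, $b_2 = 0$, $b_1 = -b(n-1)/n$, $b_0 = -c$ in that theorem's convention. Lemma 5.2 then supplies the closed form of the recurrence $(t_r)$ associated with this $q$, which after substitution matches the explicit formula for $(t_r)$ in the statement of the theorem. The delicate computation is extracting the three coefficients of the quadratic remainder $r_2$ from the triple sum in Theorem 5.1. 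Since $a_{n-1-\nu}$ is nonzero only at $\nu \in \{0,n-3,n-1\}$, a term-by-term inspection of the summation ranges shows that for each $k\in\{0,1,2\}$ only the $\nu=0$ contribution survives; the surviving expression collapses via the recurrence $b_3 t_r = b_1 t_{r-2} + b_0 t_{r-3}$ so that the coefficients of $x^2$, $x^1$, $x^0$ in $r_2$ become $3a + n t_{n-2}$, $n t_{n-1}$, and $b + (nb_0/b_3)\,t_{n-3}$, respectively. Vieta's formulas for the roots $z_1,z_2$ of $r_2$ then reproduce the stated values of $e_1$ and $e_2$.

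A further application of Theorem 2.4 gives $R(r_1, f') = (a(n-3)/n)^{n-3} R(r_1, r_2)$, and equation (2.3) of Theorem 2.5 gives $R(r_1, r_2) = (3a + n t_{n-2})^3 \, r_1(z_1)\, r_1(z_2)$. Expanding the product with the power-sum identities $z_1^2 + z_2^2 = e_1^2 - 2e_2$ and $z_1^3 + z_2^3 = e_1^3 - 3e_1 e_2$ reproduces the six-term bracket of the theorem. Collecting the sign contributions from the two uses of Theorem 2.4 and from Corollary 2.2, and noting the congruence $n(n-1)/2 + (n-3)(n-1) \equiv (n-1)(n+2)/2 \pmod{2}$ (equivalent to $(n-1)(n-4)$ being even), yields the advertised sign. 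The main obstacle is the combinatorial bookkeeping in the middle paragraph: confirming that the nominally complicated triple sum in Theorem 5.1 really does collapse to just three terms expressible through $t_{n-3}, t_{n-2}, t_{n-1}$; everything else is a mechanical extension of the strategy already used in Sections 3 and 4.
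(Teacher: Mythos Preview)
Your proposal is correct and follows essentially the same route as the paper: reduce $f$ modulo $f'$ to the cubic $r_1$, invoke Theorem~5.1 with Lemma~5.2 to reduce $f'$ modulo $r_1$ to a quadratic $r_2$ whose coefficients are $3a+nt_{n-2}$, $nt_{n-1}$, $b+\frac{nb_0}{b_3}t_{n-3}$, and then evaluate $r_1$ at the two roots of $r_2$ via~(2.3) and Newton's identities. The only cosmetic difference is that you apply Theorem~2.4 directly to $R(f,f')$ (picking up $(-1)^{(n-3)(n-1)}$), whereas the paper first swaps to $R(f',f)$ and later swaps $R(f',r)$ to $R(r,f')$ (picking up $(-1)^{n-1}$); your parity check $(n-1)(n-4)\equiv 0\pmod 2$ confirms these agree.
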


\begin{proof}
Let the assumptions of the theorem hold. Then $f'(x)=nx^{n-1}+3ax^2+b$ and Theorem $2.8$ implies that
$$\Delta(f)=(-1)^\frac{n(n-1)}{2}R(f,f').$$
Due to Euclidean division of polynomials we compute the remainder $r$ on dividing $f$ by $f'$
$$x^n+ax^3+bx+c=\frac{1}{n}x\left(nx^{n-1}+3ax^2+b\right)+r(x),$$
where
$$r(x)=\frac{a(n-3)}{n}x^3+\frac{b(n-1)}{n}x+c.$$
Corollary $2.2$ and Theorem $2.4$ assert that
$$R(f,f')=R(f',f)=n^{n-3}R(f',r).$$
In order to specify the remainder $\Tilde{r}$ on dividing $f'$ by $r$, we use Theorem $5.1$. Let us observe that in this case $a_i$ are corresponding coefficients of polynomial $f'$. On the other hand, we put $b_3=\frac{a(n-3)}{n}, b_2=0, b_1=-\frac{b(n-1)}{n}$ and $b_0=-c,$ because of the assumptions of Theorem $5.1$. By a few calculations and the fact that the sequence $(t_l)$ satisfies $t_3=\frac{b_1}{b_3}t_1$ and $t_s=\frac{1}{b_3}(b_1t_{s-2}+b_0t_{s-3})$ for $s>3$, we can conclude that
\begin{align*}
   \Tilde{r}(x)&=\sum_{k=0}^{2}\left(a_k+\frac{1}{b_3}\sum_{i=0}^{k}b_i\sum_{\nu=0}^{n-4-k+i}t_{n-3-k+i-\nu}a_{n-1-\nu}\right)x^k \\
 &=(a_2+a_{n-1}t_{n-2})x^2+a_{n-1}t_{n-1}x+a_0+\frac{b_0}{b_3}t_{n-3}a_{n-1}, 
\end{align*}
where the sequence $(t_l)$ can be expressed by the equation $(5.3)$. Using Corollary $2.2$ and Theorem $2.4$ again we get that
\begin{align*}
    R(f',r)&=(-1)^{n-1}R(r,f')=(-1)^{n-1}\left(\frac{a(n-3)}{n}\right)^{n-3}R(r,\Tilde{r}).
\end{align*}
The roots of the polynomial $\Tilde{r}$ are given by
\begin{align*}
    \xi_1&=\frac{-a_{n-1}t_{n-1}+\sqrt{(a_{n-1}t_{n-1})^2-4(a_2+a_{n-1}t_{n-2})(a_0+\frac{b_0}{b_3}t_{n-3}a_{n-1})}}{2(a_2+a_{n-1}t_{n-2})}\\
    &=\frac{-nt_{n-1}+\sqrt{(nt_{n-1})^2-4(3a+nt_{n-2})(b-\frac{cn^2}{a(n-3)}t_{n-3})}}{2(3a+nt_{n-2})}
\end{align*}
and
\begin{align*}
    \xi_2&=\frac{-a_{n-1}t_{n-1}-\sqrt{(a_{n-1}t_{n-1})^2-4(a_2+a_{n-1}t_{n-2})(a_0+\frac{b_0}{b_3}t_{n-3}a_{n-1})}}{2(a_2+a_{n-1}t_{n-2})} \\
    &=\frac{-nt_{n-1}-\sqrt{(nt_{n-1})^2-4(3a+nt_{n-2})(b-\frac{cn^2}{a(n-3)}t_{n-3})}}{2(3a+nt_{n-2})}.
\end{align*}
Now, we use $(2.3)$ to describe $R(r,\Tilde{r})$ explicitly, as
$$R(r,\Tilde{r})=(a_2+a_{n-1}t_{n-2})^3\prod_{i=1}^2\left(\frac{a(n-3)}{n}\xi_i^3+\frac{b(n-1)}{n}\xi_i+c\right).$$
Finally, let us combine all found formulae in order to obtain
\begin{align*}
    \Delta(f)=&(-1)^{\frac{(n-1)(n+2)}{2}}a^{n-3}(n-3)^{n-3}(3a+nt_{n-2})^3\\
    &\prod_{i=1}^2\left(\frac{a(n-3)}{n}\xi_i^3+\frac{b(n-1)}{n}\xi_i+c\right).
\end{align*}
Once again we introduce both the sum and the product of the roots of $\Tilde{r}$, that are $e_1:=\xi_1+\xi_2=-\frac{nt_{n-1}}{3a+nt_{n-2}}$ and $e_2:=\xi_1\xi_2=\frac{ab(n-3)-cn^2t_{n-3}}{a(n-3)(3a+nt_{n-2})}.$
Eventually, on the base of computation in the proof of Theorem $3.1$ we conclude
\begin{align*}
    \Delta(f)=&(-1)^{\frac{(n-1)(n+2)}{2}}a^{n-3}(n-3)^{n-3}(3a+nt_{n-2})^3\\
    &\Bigg(\left(\frac{a(n-3)}{n}\right)^2e_2^3+\frac{ab(n-1)(n-3)}{n^2}(e_1^2-2e_2)e_2\\
    &+\frac{ac(n-3)}{n}(e_1^3-3e_1e_2) +\left(\frac{b(n-1)}{n}\right)^2e_2+\frac{bc(n-1)}{n}e_1+c^2\Bigg),
\end{align*}
which was to be shown.
\end{proof}

It is worth stressing that at the first glance the obtained formula seems to be shorter and easier than the previous ones. However, the most complex parts are located in terms $e_1$ and $e_2$; and, in fact, they make the formula much more complicated than the others. Similarly to the situation in Sec. 3, we can find an additional explicit formula for $\Delta(x^n+ax^{n-1}+bx^{n-3}+c)$.

\begin{wn}
The discriminant of a quadrinomial $f(x)=x^n+ax^{n-1}+bx^{n-3}+c$ such that $abc\neq 0$ and $n>5$ is given by
\begin{align*}
    \Delta(f)=&(-1)^{\frac{(n-1)(n+2)}{2}}b^{n-3}c^{n+1}(n-3)^{n-3}\left(\frac{3b}{c}+nt_{n-2}\right)^3\\
    &\Bigg(\left(\frac{b(n-3)}{cn}\right)^2e_2^3+\frac{ab(n-1)(n-3)}{c^2n^2}(e_1^2-2e_2)e_2\\
    &+\frac{b(n-3)}{c^2n}(e_1^3-3e_1e_2) +\left(\frac{a(n-1)}{cn}\right)^2e_2+\frac{a(n-1)}{c^2n}e_1+\frac{1}{c^2}\Bigg),
\end{align*}
where $e_1=-\frac{cnt_{n-1}}{3b+cnt_{n-2}}, e_2= \frac{ab(n-3)-cn^2t_{n-3}}{b(n-3)(3b+cnt_{n-2})}$
and the sequence $(t_r)$ is expressed as
\begin{align*}
t_r = \begin{cases}
          \left(\frac{cn}{b(n-3)}\right)^{\ceil{\frac{r}{2}}}\sum_{k=0}^{\floor{\frac{r}{6}}}\binom{\floor{\frac{r}{2}}-k}{\floor{\frac{r}{2}}-3k}\left(\frac{1}{c}\right)^{2k}\left(\frac{a(n-1)}{cn}\right)^{\floor{\frac{r}{2}}-3k}\left(\frac{b(n-3)}{cn}\right)^k, & 2\nmid r, \\
          \left(\frac{cn}{b(n-3)}\right)^{\frac{r}{2}}\sum_{k=1}^{\floor{\frac{r+2}{6}}}\binom{\frac{r}{2}-k}{\frac{r}{2}-3k+1}\left(\frac{1}{c}\right)^{2k-1}\left(\frac{a(n-1)}{cn}\right)^{\frac{r}{2}-3k+1}\left(\frac{b(n-3)}{cn}\right)^{k-1}, & 2|r.
       \end{cases}
\end{align*}
\end{wn}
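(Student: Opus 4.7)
The plan is to apply Theorem $2.10$ to pass to the reciprocal polynomial and then invoke Theorem $5.3$, in complete analogy with how Corollary $3.3$ is deduced from Theorem $3.1$. Since $abc \neq 0$ ensures $f(0) = c \neq 0$, the reciprocal of $f$ is the genuine degree-$n$ polynomial
$$f^\star(x) = x^n f(1/x) = cx^n + bx^3 + ax + 1,$$
and Theorem $2.10$ gives $\Delta(f) = \Delta(f^\star)$. Pulling the leading coefficient out of $f^\star$ via Definition $2.7$ yields
$$\Delta(f^\star) = c^{2n-2}\,\Delta\!\left(x^n + \tfrac{b}{c}x^3 + \tfrac{a}{c}x + \tfrac{1}{c}\right),$$
and the polynomial inside the last $\Delta$ is of the shape treated by Theorem $5.3$: its non-leading coefficients are nonzero, and the hypothesis $n>5$ certainly implies the requirement $n>4$ of that theorem.

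The next step is to apply Theorem $5.3$ under the formal substitution $a \mapsto b/c$, $b \mapsto a/c$, $c \mapsto 1/c$. The auxiliary sequence $(t_r)$ of Theorem $5.3$ is then recomputed from the recurrence $(5.2)$ for the new remainder polynomial, which, by the same computation as in the proof of Theorem $5.3$, is $\bigl(b(n-3)/(cn)\bigr)x^3 + \bigl(a(n-1)/(cn)\bigr)x + 1/c$; concretely, the base factor $n/(a(n-3))$ turns into $cn/(b(n-3))$, and the three building blocks $c$, $b(n-1)/n$, $a(n-3)/n$ appearing inside the sum are replaced by $1/c$, $a(n-1)/(cn)$, $b(n-3)/(cn)$, reproducing verbatim the sequence $(t_r)$ displayed in the corollary. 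The quantities $e_1, e_2$ transform in the obvious way: the substitution changes $-nt_{n-1}/(3a + nt_{n-2})$ into $-nt_{n-1}/(3(b/c) + nt_{n-2})$, and multiplying numerator and denominator by $c$ gives the stated form of $e_1$; an analogous clearing of denominators on $e_2$ produces its stated expression.

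The global prefactor is then reassembled by absorbing $c^{2n-2}$ into the output of Theorem $5.3$: the factor $a^{n-3}$ there becomes $(b/c)^{n-3}$, so combined with $c^{2n-2}$ it produces the $b^{n-3}c^{n+1}$ announced in the corollary, while each of the six summands inside the large brackets of Theorem $5.3$ picks up the appropriate inverse power of $c$ in its coefficient, matching the coefficients displayed in the statement exactly. No new idea is required beyond that of Corollary $3.3$; the only real obstacle is the careful bookkeeping of powers of $c$ throughout the sequence $(t_r)$, inside $e_1$ and $e_2$, and across each of the six terms of the bracketed polynomial, so that every instance collects into precisely the displayed form.
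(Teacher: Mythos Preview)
Your proposal is correct and follows exactly the route the paper takes: the paper's own proof simply states that ``the procedure goes in the analogous way to the proof of Corollary $3.3$,'' i.e., pass to the reciprocal via Theorem $2.10$, normalize with Definition $2.7$, and apply Theorem $5.3$ with the substitution $(a,b,c)\mapsto(b/c,a/c,1/c)$. Your write-up merely makes the bookkeeping explicit.
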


\begin{proof}
The procedure goes in the analogous way to the proof of Corollary $3.3$.
\end{proof}

\begin{uw}
\rm{It is worth to notice that an effective formula for $\Delta(x^n+ax^3+bx^2+c)$ might probably be derived in a very similar way to $\Delta(x^n+ax^3+bx+c)$. Clearly, Lemma $5.2$ needs to be modified in this case, and Theorem $2.6$ may be applied, but the remaining reasoning should remain actually the same.}
\end{uw}

\section{Concluding remarks}

A question arises whether the presented approach can be used to get a formula for the discriminant of a general quadrinomial. Let us consider a heuristic reasoning for a polynomial $p(x)=x^n+ax^k+bx^l+c$ such that $n>k>l>0$ and $abc \neq0$. We have
$$p(x)=q_0(x)p'(x)+r_0(x),$$
where
\begin{align*}
r_0(x) = \begin{cases}
          \frac{a(n-k)}{n}x^k+\frac{b(n-l)}{n}x^l+c, & k<n-1, \\
          -\frac{a^2(n-1)}{n^2}x^{n-2}+\frac{b(n-l)}{n}x^l-\frac{abl}{n^2}x^{l-1}+c, & k=n-1,
       \end{cases}
\end{align*}
and
\begin{align*}
q_0(x) = \begin{cases}
          \frac{1}{n}x, & k<n-1, \\
          \frac{1}{n}x+\frac{a}{n^2}, & k=n-1.
       \end{cases}
\end{align*}
Since
$$p'(x)=nx^{l-1}\left(x^{n-l}+\frac{ak}{n}x^{k-l}+\frac{bl}{n}\right),$$
it is convenient to use the multiplicative resultant property (Theorem $2.6$). Additionally, let us denote by $p_1'$ and $p_2'$ the polynomials $p_1'(x)=nx^{l-1}$ and $p_2'(x)=p'(x)/p_1'(x)$. The most demanding challenge in further examination relies on determining the resultant $R(p_2',r_0)$. We continue the polynomial division process as long as the degree of the remainder is greater than two. The length of this process depends on the value of $\min \{n-l,k\}$. In the first step, we need to find the explicit form of the sequence $(t_r)$, what is a highly non-trivial task in this case. Moreover, if we determined the formula of remainder $r_1$ from dividing either $p_2'$ by $r_0$ or $r_0$ by $p_2'$ (what depends on the $\min \{n-l,k\}$ again), we would proceed computing remainders from dividing $r_{j-1}$ by $r_j$ for $j=1,\ldots$ systematically until the remainder is a quadratic polynomial. Therefore, the complexity of the general problem increases and may be difficult to obtain a general result. However, for some infinite families of polynomials our method is effective. Indeed, let us consider \linebreak a quadrinomial $f(x)=x^{2n}+ax^n+bx^l+c$ such that $abc\neq 0$, $a^2\neq4c$ and $n>2l$. Since $f'(x)=2nx^{2n-1}+anx^{n-1}+blx^{l-1}$, we factor $f'$ into $f_1'(x):=2nx^{l-1}$ and $f_2':=f'/f_1'$. Now, we use Theorem $2.8$ and Theorem $2.6$ in order to obtain
\begin{align*}
    \Delta(f)=(-1)^{n(2n-1)}R(f,f_1')R(f,f_2').
\end{align*}
Theorem $2.5$ implies that $R(f,f_1')=(2n)^{2n}c^{l-1}$. On the other hand, the calculation of $R(f,f')$ is more difficult. To achieve the goal we use Theorem $2.4$, Corollary $2.2$ and Theorem $2.6$, respectively, and repeat such a procedure three times. It is worth mentioning that the multiplicative property of the resultant is responsible only for preserving monic polynomials as arguments of $R$. After the processes of dividing the polynomial in the first position of $R$ by the one in the second --- which is possible due to successive applying Corollary $2.2$ --- we get the following remainders:
\begin{align*}
    \widetilde{r}_0(x)&=\frac{a}{2}x^n+\frac{b(2n-l)}{2n}x^l+c,\\
    \widetilde{r}_1(x)&=\frac{a^2-4c}{2}x^{n-l}+\left(\frac{b(2n-l)}{an}\right)^2x^l+\frac{b\{a^2l+4c(2n-l)\}}{2a^2n},\\
    \widetilde{r}_2(x)&=-\frac{2}{a(a^2-4c)}\left(\frac{b(2n-l)}{n}\right)^2x^{2l}+\frac{b\{(2n-l)(a^2-8c)-a^2l\}}{a(a^2-4c)n}x^l+\frac{2c}{a}.
\end{align*}
We denote by $\overline{r}_i$ the polynomial $\widetilde{r}_i$ divided by its leading coefficient. Hence, we obtain that
\begin{align*}
    \Delta(f)&=(-1)^{n}(2n)^{2n}c^{l-1}\left(\frac{a}{2}\right)^{2n-l}(-1)^{n^2+n(n-l)}\left(\frac{a^2-4c}{2a}\right)^{n}\\
    &\phantom{(}\cdot(-1)^{(n-2l)(n-l)}\left(-\frac{2}{a(a^2-4c)}\left(\frac{b(2n-l)}{n}\right)^2\right)^{n-l}R(\overline{r}_1,\overline{r}_2).
\end{align*}
However, let us observe that $\overline{r}_2$ is a quadratic trinomial in $x^l$. Therefore, we can determine its roots easily, and apply Theorem $2.5$ in order to deduce that
\begin{align*}
    \Delta(f)=(-1)^{n+l}b^{2(n-l)}c^{l-1}(a^2-4c)^ln^{2l}(2n-l)^{2(n-l)}\prod_{j=1}^{2l}\overline{r}_1(\xi_j),
\end{align*}
where each $\xi_j$ is a root of $\overline{r}_2$ for any $j\in\{1,2,\ldots,2l\}$. Finally, if we fixed the value of $l$ and made similar computations to those from the preceding sections, we would find an effective formula for $\Delta(x^{2n}+ax^n+bx^l+c)$. 

Summing up, despite the fact that our method is probably too complex to find an explicit formula for $\Delta(x^n+ax^k+bx^l+c)$ in general, it can be successfully applied for some infinite families of quadrinomials. 

\section*{Acknowledgements}
I would like to thank Maciej Ulas and Piotr Miska for their profound comments and helpful suggestions. 



\begin{thebibliography}{99}
\bibitem{CL}
M. Cipu, F. Luca, {\it On the Galois group of the generalized Fibonacci polynomial}, An. \c{S}tiin\c{t}.
Univ. Ovidius Constan\c{t}a Ser. Mat. 9(1) (2001), 27–38.

\bibitem{DS}
K. Dilcher, K. B. Stolarsky, {\it Resultants and discriminants of Chebyshev and related polynomials}, Trans. Amer. Math. Soc. 357(3) (2005), 965–981.

\bibitem{GD}
  D. Drucker, G. R. Greenfield, 
  {\it On the Discriminant of a Trinomial},
  Linear Algebra and its Applications 62 (1984), 97-114.
  
 \bibitem{G}
 T. A. Gassert, {\it Chebyshev action on finite fields}, Discrete Math. 315 (2014), 83–94.
  
  \bibitem{SJ} S. Janson, {\it Resultant and discriminant of polynomials},
Department of Mathematics, Uppsala University, Uppsala, 2007, \url{http://www2.math.uu.se/~svante/papers/sjN5.pdf}.

 \bibitem{LJ1}
  L. Jones,
  {\it A brief note on some infinite families of monogenic polynomials},
  Bull. Austral. Math. Soc. 100 (2019), 239–244.
  
  \bibitem{LJ2}
  L. Jones,
  {\it Generating infinite families of monogenic polynomials using a new discriminant formula},
  Albanian J. Math. 14 (2020), 37–45.
  
  \bibitem{LJ3}
  L. Jones,
  {\it Some new infinite families of monogenic polynomials with non-squarefree discriminant}, 
  Acta Arithmetica 197 (2021), 213-219.

\bibitem{TW} T. W. Judson, {\it Abstract Algebra: Theory and Applications},
University of Puget Sound, Washington, 2013, 273-274.


\bibitem{ST}
S. Otake, T. Shaska, {\it On the discriminant of certain quadrinomials}, in book: Algebraic Curves and Their Applications, Contemp. Math., 724 (2019), ISBNs: 978-1-4704-4247-7 (print); 978-1-4704-5153-0 (online).

\bibitem{WW}
  W. Wanicharpichat,
  {\it The Generalized Remainder and Quotient Theorems of Univariate Polynomials}, arXiv:1506.06637 [math.NA], 2015.


\end{thebibliography}
\end{document}